\documentclass[a4paper,11pt]{article}

%%%%% author macros %%%%%%%%%
% place your own macros and packages HERE
 \usepackage{ graphicx, cite, amsfonts, amssymb, amsbsy, amsmath, amsthm}
%%%%% end %%%%%%%%%
\usepackage{enumerate}

\setlength{\topmargin}{0cm}
\setlength{\oddsidemargin}{1cm}
\setlength{\evensidemargin}{1cm}
\setlength{\textwidth}{15cm}% 
\setlength{\textheight}{23cm}

\newtheorem{theorem}{Theorem}[section]
\newtheorem{lemma}[theorem]{Lemma}

\newtheorem{proposition}[theorem]{Proposition}
\newtheorem{corollary}[theorem]{Corollary}
\numberwithin{equation}{section}
\newtheorem{example}[theorem]{Example}
\numberwithin{equation}{section}
 
\begin{document}

\title{ \textbf{One-Generator Quasi-Abelian Codes Revisited}\thanks{This research  is    supported by the DPST Research Grant 005/2557 and  the Thailand Research Fund  under Research Grant TRG5780065.} }

\author{Somphong Jitman\thanks{Department of Mathematics, Faculty of Science, Silpakorn University, Nakhon Pathom 73000, Thailand. Email: sjitman@gmail.com}, Patanee Udomkavanich\thanks{Department of Mathematics and Computer Science,
			Faculty of Science, Chulalongkorn University,   Bangkok 10330,  Thailand. Email:pattanee.u@chula.ac.th}}

\maketitle

\begin{abstract}
The class of $1$-generator quasi-abelian codes over finite fields is revisited.   Alternative and explicit  characterization and enumeration of    such codes are given.   An algorithm to find  all $1$-generator quasi-abelian codes is provided.  Two $1$-generator quasi-abelian  codes whose minimum distances are  improved from  Grassl's online table   are presented. 

\textbf{Keywords.} group algebras, quasi-abelian codes, minimum distances, $1$-generator.
\\
\textbf{2010 AMS Classification.} 94B15,  94B60, 16A26
\end{abstract}

%----------------------------------------------------------------

\section{Introduction}

As a family  of codes with good parameters, rich algebraic  structures, and  wide ranges of applications (see \cite{LF2001}, \cite{LS2001}, \cite{LS2005}, \cite{LS2003},  \cite{PZ2007}, \cite{S2004} , and references therein),  quasi-cyclic codes have been studied for a half-century. Quasi-abelian codes, a generalization of  quasi-cyclic codes,  have been introduced in \cite{W1977} and extensively  studied in  \cite{JL2014}. 

Given finite abelian groups $H\leq G$ and a finite field  $\mathbb{F}_q$, an  {\em $H$-quasi-abelian code} is defined to be   an $\mathbb{F}_q[H]$-submodule of   $\mathbb{F}_q[G]$.  Note that $H$-quasi-abelian codes are not only  a  generalization of  quasi-cyclic codes (see \cite{LF2001}, \cite{LS2001},  \cite{JL2014}, and \cite{W1977}) if $H$ is cyclic but also of  abelian codes  (see \cite{Be1967_2} and \cite{Be1967})  if $G=H$,    and  of  cyclic codes (see \cite{MS1977}) if $G=H$ is cyclic.  The characterization and enumeration of quasi-abelian codes have been established in \cite{JL2014}. An  $H$-quasi-abelian code  $C$ is said to be of  {\em $1$-generator} if $C$ is a cyclic $\mathbb{F}_q[H]$-module. Such a code can be viewed as a generalization    of   $1$-generator quasi-cyclic codes which are more frequently studied and applied (see \cite{PZ2007}, \cite{S2004}, and \cite{LS2005}).  Analogous to the case of  $1$-generator quasi-cyclic codes,         the number of    $1$-generator quasi-abelian codes has been determined in \cite{JL2014}. However, an explicit construction and an algorithm to determine  all $1$-generator quasi-abelian codes have not been well studied.

In this paper,  we give an alternative discussion on the algebraic structure of $1$-generator quasi-abelian codes and  an algorithm to find  all $1$-generator quasi-abelian codes.  Examples of new codes derived from $1$-generator quasi-abelian codes are presented

The paper is organized as follows. In Section~\ref{sec:pre}, we recall some notations and basic results.   An alternative discussion on the algebraic structure of $1$-generator quasi-abelian codes is given in  Section~\ref{sec:1gen} together with an algorithm to find   all $1$-generator quasi-abelian codes and the number of such codes.  Examples of new codes derived from $1$-generator quasi-abelian codes are presented in Section \ref{sec5}.

%%%%%%%%%%%%%%%%%%%%%%%%%%%%%%%%%%%%%%%%%%%%%%%%%%%%%%%%%%%%%%%%%%%%%%%%%%%%%
\section{Preliminaries}\label{sec:pre}
Let $\mathbb{F}_q$  denote  a finite field of order $q$ and let $G$ be a finite abelian group of order $n$, written additively. Denote by    $\mathbb{F}_q[G]$   the {\it group ring} of
$G$ over~$\mathbb{F}_q$. The elements in $ \mathbb{F}_q[G]$ will be written as $\sum_{g\in G}\alpha_{{g }}Y^g $,
where $ \alpha_{g }\in \mathbb{F}_q$.  The addition and the multiplication in $ \mathbb{F}_q[G]$ are  given as in the usual polynomial rings over $\mathbb{F}_q$ with the indeterminate $Y$, where the indices are computed additively in $G$. We note that  $Y^0=1$ is the identity of $ \mathbb{F}_q[G]$,  where $1$ is the identity in $ \mathbb{F}_q$ and  $0$ is the identity of $G$.

   Given a ring $\mathcal{R}$, a linear code of length $n$ over $\mathcal{R}$ refers to a submodule of the $\mathcal{R}$-module $\mathcal{R}^n$.  A {\em linear code}  in $\mathbb{F}_q[G]$ refers to an $\mathbb{F}_q$-subspace $C$ of $\mathbb{F}_q[G]$. This can be viewed as  a linear code  of length $n$ over $\mathbb{F}_q$ by indexing the $n$-tuples by the elements in $G$. 
The {\em Hamming weight} ${\rm wt}(\boldsymbol{u})$ of $\boldsymbol{u}=\sum_{g\in G}u_{{g }}Y^g \in \mathbb{F}_q[G]$  is defined to  be the number of nonzero term $u_g$'s in $\boldsymbol{u}$.  The {\em minimum Hamming distance} a  code   $C$  is defined by ${\rm d}(C):=\min\{{\rm wt}(\boldsymbol{u})\mid \boldsymbol{u}\in C, \boldsymbol{u}\neq 0\}$.   A code $C$ is referred  to as an $[n,k,d]_q$ code if $C$  has $\mathbb{F}_q$-dimension $k$ and minimum Hamming distance $d$.

 Given a subgroup $H$ of $G$, 
a code $C$ in  $\mathbb{F}_q[G]$ is called an {\em $H$-quasi-abelian code} if $C$ is an $\mathbb{F}_q[H]$-module, i.e.,   $C$ is closed under the multiplication by the elements in $\mathbb{F}_q[H]$. Such  a code will be  called a {\em quasi-abelian code} if $H$ is not specified or where it is clear in the context. An $H$-quasi-abelian code  $C$ is said to be of  {\em $1$-generator} if $C$ is a cyclic $\mathbb{F}_q[H]$-module.  
% A $1$-generator quasi-abelian code  $C$ is said to be {\em maximal} if every  $1$-generator quasi-abelian code $D$ containing $C$ satisfies $C=D$. 

Assume that $H\leq G$ such that $|H|=m$  and the index $[G:H]=\frac{n}{m}=l$. Let $\{\mathfrak{g}_1,\mathfrak{g}_2,\dots,\mathfrak{g}_l\}$ be a fixed set of representatives of the cosets of $H$ in $G$. Let  ${R}:=\mathbb{F}_q[H]$.   Define $\Phi: \mathbb{F}_q[G] \to {R}^l$ by

\begin{align}\label{mphi}
	\Phi(\sum_{ h\in   H}\sum_{i=1}^l\alpha_{h+\mathfrak{g}_i}Y^ {h+\mathfrak{g}_i})= (\boldsymbol{\alpha}_1(Y),\boldsymbol{\alpha}_2(Y),\dots, \boldsymbol{\alpha}_l(Y)), 
\end{align}
where $\boldsymbol{\alpha}_i(Y) =	 \sum_{h\in H}\alpha_{h+\mathfrak{g}_i}Y^h\in  {R},$
for all $i=1,2,\dots,l$. It is not difficult to see that $\Phi$ is an $R$-module isomorphism, and hence, the next lemma follows.
\begin{lemma}
	The map $\Phi$ induces a one-to-one correspondence between $H$-quasi-abelian codes  in $\mathbb{F}_q[G]$ and  linear codes  of length $l$ over  ${R}$.
\end{lemma}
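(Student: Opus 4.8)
The plan is to reduce the statement to the fact, already recorded just above the lemma, that $\Phi$ is an $R$-module isomorphism, and then invoke the standard correspondence theorem for module isomorphisms. The first step is to reconcile the two collections of objects appearing in the claimed correspondence with collections of submodules. By definition an $H$-quasi-abelian code $C$ in $\mathbb{F}_q[G]$ is an $\mathbb{F}_q$-subspace of $\mathbb{F}_q[G]$ that is closed under multiplication by $R=\mathbb{F}_q[H]$; since $\mathbb{F}_q$ embeds in $R$ as $\{aY^0\mid a\in\mathbb{F}_q\}$, closure under multiplication by $R$ already forces closure under multiplication by $\mathbb{F}_q$, so being an $H$-quasi-abelian code is exactly the same as being an $R$-submodule of $\mathbb{F}_q[G]$. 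On the other side, by the definition recalled in the preliminaries, a linear code of length $l$ over $R$ is precisely a submodule of the $R$-module $R^l$. Hence the asserted correspondence is nothing but a bijection between $R$-submodules of $\mathbb{F}_q[G]$ and $R$-submodules of $R^l$.

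Next I would make the two assignments explicit and check they are mutually inverse. Since $\Phi$ is an $R$-module homomorphism, for every $R$-submodule $C\subseteq\mathbb{F}_q[G]$ the image $\Phi(C)$ is an $R$-submodule of $R^l$, i.e.\ a linear code of length $l$ over $R$; and since $\Phi$ is bijective, for every submodule $D\subseteq R^l$ the preimage $\Phi^{-1}(D)$ is an $R$-submodule of $\mathbb{F}_q[G]$, i.e.\ an $H$-quasi-abelian code. Because $\Phi$ is a bijection, the maps $C\mapsto\Phi(C)$ and $D\mapsto\Phi^{-1}(D)$ satisfy $\Phi^{-1}(\Phi(C))=C$ and $\Phi(\Phi^{-1}(D))=D$, so they are inverse to one another, which yields the desired one-to-one correspondence.

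The only point requiring any care, and it is not deep, is verifying that $\Phi(C)$ and $\Phi^{-1}(D)$ are genuinely $R$-submodules rather than merely subsets; this is immediate from the fact that $\Phi$ respects both addition and the $R$-action, which is exactly what is meant by $\Phi$ being an $R$-module isomorphism and is visible from the coordinatewise description in \eqref{mphi}. I would close by remarking that the same computation shows the correspondence preserves inclusions, so it is in fact a lattice isomorphism between the submodule lattices, and that no work beyond the already-established isomorphism property of $\Phi$ is needed.
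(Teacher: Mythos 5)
Your proof is correct and follows exactly the route the paper intends: the paper simply asserts that the lemma follows from $\Phi$ being an $R$-module isomorphism, and your argument is the routine verification of that assertion (identifying both classes of codes with submodules and transporting them along the isomorphism). No discrepancy to report.
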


Throughout, assume that $\gcd(q,|H|)=1$, or equivalently,  $\mathbb{F}_q[H]$ is semisimple.  Following \cite[Section 3]{JL2014}, the group ring $R=\mathbb{F}_q[H]$ is decomposed as follows.

For each $h\in H$, denote by   ${\rm ord}(h)$ the order of $h$ in $H$. 
	A {\it $q$-cyclotomic class}   of $H$ containing $h\in H$, denoted by $S_q(h)$, is defined to be the set
	\begin{align*}
		S_q(h):=\{q^i\cdot h \mid i=0,1,\dots\} =\{q^i\cdot h \mid 0\leq i\leq \nu_h \}, 
	\end{align*}
	where $q^i\cdot h:= \sum_{j=1}^{q^i}h$ in $H$ and  $\nu_h$ is the multiplicative order of $q$ in $\mathbb{Z}_{{\rm ord}(h)}$. 

	An {\em idempotent} in a ring ${R}$ is a non-zero element $e$ such that $e^2=e$. An idempotent $e$  is said to be {\em primitive} if for every other idempotent $f$, either $ef=e$ or $ef=0$.   The  primitive idempotents in ${R}$ are induced by  the $q$-cyclotomic classes of $H$  (see \cite[Proposition II.4]{DKL2000}).    Every idempotent  $e$  in $R$ can be viewed as a unique  sum of primitive idempotents in $R$.  The $\mathbb{F}_q$-{\em dimension} of an idempotent $e\in R$ is defined to be the $\mathbb{F}_q$-dimension of $Re$.

Form \cite[Subsection 3.2]{JL2014},  ${R}:=\mathbb{F}_q[H]$ can be decomposed as 
\begin{align*}
R=Re_1+Re_2+\cdots+Re_s,
\end{align*}
	where $e_1,e_2,\dots, e_s$ are the primitive idempotents in $R$.   Moreover, every ideal in $R$ is of the form $Re$, where $e$ is an idempotent in $R$.

\section{$1$-Generator Quasi-Abelian Codes}\label{s:1-gen}\label{sec:1gen}

%Recall that $H\leq G$ denote finite abelian groups such that $|G|=n$, $|H|=m$, $[G:H]=l$, $\gcd(m,q)=1$, and $R=\mathbb{F}_q[H]$.%If we restrict $H$ to be cyclic, then our results coincide with $1$-generator quasi-cyclic  codes in \cite{PZ2007}, \cite{S2004}, and \cite{LS2005}.

In \cite{JL2014}, characterization and enumeration of $1$-generator $H$-quasi-abelian codes in $\mathbb{F}_q[G]$.   
In this section, we give  alternative   characterization and enumeration of such codes. The  characterization in Subsection 3.1 allows us to express an algorithm to find all  $1$-generator $H$-quasi-abelian codes in $\mathbb{F}_q[G]$ in Subsection 3.2.

Using the $R$-module isomorphism $\Phi$ defined in (\ref{mphi}), to study $1$-generator $H$-quasi-abelian codes in $\mathbb{F}_q[G]$,  it suffices to consider  cyclic $R$-submodules $R\boldsymbol{a}$, where  $\boldsymbol{a}=(a_1,a_2,\dots,a_l)\in R^l$.

For each  $\boldsymbol{a}=(a_1,a_2,\dots,a_l)\in R^l$,  there exists a unique idempotent $e\in R$ such that $Re=Ra_1+Ra_2+\dots+Ra_l$. 
The element  $e$ is called an {\em idempotent generator element} for $R\boldsymbol{a}$. An idempotent $f\in R$ of largest $\mathbb{F}_q$-dimension such that  
\[f\boldsymbol{a}=0  \]
is called an {\em idempotent check element} for $R\boldsymbol{a}$.

Let $S=\mathbb{F}_{q^l}[H]$, where $\mathbb{F}_{q^l}$ is an  extension field of $\mathbb{F}_{q}$ of degree $l$. Let  $\{\alpha_1,\alpha_2, \dots, \alpha_l\}$ be  a fixed basis of $\mathbb{F}_{q^l}$ over $\mathbb{F}_q$.
Let $\varphi: R^l \to S$ be an $R$-module isomorphism defined by
\[\boldsymbol{a}=(a_1,a_2,\dots,a_l)\mapsto A=\sum_{i=1}^l \alpha_i a_i.\]
Using the map $\varphi$,  the code $R\boldsymbol{a}$ can be regarded as an $R$-module $RA$ in $S$.

\begin{lemma}[{\cite[Lemma 6.1]{JL2014}}] \label{lem:e+f=1}
	If $e$ and $f$ are  idempotent generator and  idempotent check elements of $R\boldsymbol{a}$, respectively,  then 
	\[e+f=1\]
	and 
	\[{\rm dim}_{\mathbb{F}_q}(R\boldsymbol{a})= {\rm dim}_{\mathbb{F}_q}(Re) = m-  {\rm dim}_{\mathbb{F}_q}(Rf).\]
\end{lemma}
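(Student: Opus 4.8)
The plan is to exploit the semisimple decomposition $R = Re_1 + Re_2 + \cdots + Re_s$ into minimal ideals. Since $R = \mathbb{F}_q[H]$ is a \emph{commutative} semisimple algebra, each $Re_i$ is a field, so every nonzero element of $Re_i$ generates $Re_i$ as an $R$-module. The first step is to record the two standard facts that underlie everything: every idempotent of $R$ is a sum $\sum_{i\in K} e_i$ over some subset $K\subseteq\{1,\dots,s\}$, and correspondingly $Re=\bigoplus_{i\in K}Re_i$ has $\mathbb{F}_q$-dimension $\sum_{i\in K}\dim_{\mathbb{F}_q}(Re_i)$, with $\sum_{i=1}^s\dim_{\mathbb{F}_q}(Re_i)=\dim_{\mathbb{F}_q}(R)=m$.

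Next I would compute the idempotent generator $e$ explicitly. For each $j$, writing $a_j=\sum_i e_ia_j$ and using that each nonzero component $e_ia_j$ generates the field $Re_i$, one gets $Ra_j=\bigoplus_{i:\,e_ia_j\neq 0}Re_i$. Summing over $j$ yields $Ra_1+\cdots+Ra_l=\bigoplus_{i\in I}Re_i$, where $I=\{\,i : e_ia_j\neq 0 \text{ for some } j\,\}=\{\,i : e_i\boldsymbol{a}\neq 0\,\}$. By uniqueness of the idempotent generator this forces $e=\sum_{i\in I}e_i$. To identify $f$, I would test a candidate idempotent $\sum_{i\in J}e_i$: the equation $f\boldsymbol{a}=0$ reads $e_ia_j=0$ for all $i\in J$ and all $j$ (the summands $Re_i$ are independent), which is exactly the condition $J\subseteq\{1,\dots,s\}\setminus I$. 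The idempotent of \emph{largest} $\mathbb{F}_q$-dimension annihilating $\boldsymbol{a}$ is therefore obtained by taking $J=\{1,\dots,s\}\setminus I$, i.e. $f=\sum_{i\notin I}e_i$. Hence $e+f=\sum_{i=1}^s e_i=1$, which is the first assertion, and splitting the dimension sum over the complementary index sets $I$ and $\{1,\dots,s\}\setminus I$ gives $\dim_{\mathbb{F}_q}(Re)=m-\dim_{\mathbb{F}_q}(Rf)$.

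For the remaining equality $\dim_{\mathbb{F}_q}(R\boldsymbol{a})=\dim_{\mathbb{F}_q}(Re)$, I would consider the surjective $R$-module homomorphism $R\to R\boldsymbol{a}$ given by $r\mapsto r\boldsymbol{a}$, whose kernel is the annihilator $\{\,r : r\boldsymbol{a}=0\,\}$. The same componentwise analysis shows that $r\boldsymbol{a}=0$ is equivalent to $re_i=0$ for every $i\in I$ (for such $i$ some $e_ia_j$ is a nonzero element of the field $Re_i$, hence a unit there), with no constraint for $i\notin I$; thus the annihilator equals $\bigoplus_{i\notin I}Re_i=Rf$. Therefore $R\boldsymbol{a}\cong R/Rf\cong Re$ as $\mathbb{F}_q$-spaces, which yields the claimed dimension.

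The conceptual crux — the place deserving the most care — is the field property of the blocks $Re_i$: it is precisely this that collapses each $Ra_j$ onto a sum of whole summands and turns annihilation into the clean condition $re_i=0$. Once this is in place, the maximality of $f$ is immediate, since adjoining any $e_i$ with $i\in I$ would reintroduce a nonzero component $e_ia_j$ and destroy the property $f\boldsymbol{a}=0$; all remaining steps are direct consequences of the direct-sum decomposition.
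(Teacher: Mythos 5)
The paper does not actually prove this lemma---it is imported verbatim from \cite[Lemma~6.1]{JL2014}---so there is no in-paper proof to compare against. Your argument is correct and self-contained, and it runs on exactly the machinery the paper itself sets up: the decomposition $R=Re_1\oplus\cdots\oplus Re_s$ into fields (Corollary~3.5), the fact that every idempotent is a unique sum of primitive ones, and the observation that a nonzero element of the field $Re_i$ is a unit there. Each step checks out: $Ra_j=\bigoplus_{i:\,e_ia_j\neq 0}Re_i$, hence $e=\sum_{i\in I}e_i$ with $I=\{i: e_i\boldsymbol{a}\neq 0\}$; the annihilating idempotents are precisely the sub-sums over the complement of $I$, so the maximal one is $f=1-e$; and the identification of $\mathrm{Ann}(\boldsymbol{a})$ with $Rf$ via the surjection $r\mapsto r\boldsymbol{a}$ gives $\dim_{\mathbb{F}_q}(R\boldsymbol{a})=\dim_{\mathbb{F}_q}(R/Rf)=\dim_{\mathbb{F}_q}(Re)$. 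The only caveat is a boundary convention inherited from the statement itself, not from your proof: the paper declares idempotents to be nonzero, so the degenerate cases $\boldsymbol{a}=0$ (where $e$ would be $0$) and $Re=R$ (where $f$ would be $0$) need the tacit convention that $0$ is allowed as a generator or check element there; with that understood, your proof is complete.
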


For a ring $\mathcal{R}$, denote by $\mathcal{R}^*$ and $\mathcal{R}^\times$ the set of non-zero elements and the group of units of $\mathcal{R}$, respectively.

In order to enumerate and determine all   $1$-generator $H$-quasi-abelian codes  in $\mathbb{F}_q[G]$, we need the following results.
\begin{lemma}\label{lem:Ra=Rb}
Let $e$ be the  idempotent generator of $R\boldsymbol{a}$ and let $\boldsymbol{b}\in R^l$. Let $A=\varphi(\boldsymbol{a}) $ and $B=\varphi(\boldsymbol{b})$. Then  $R\boldsymbol{a}=R\boldsymbol{b}$ if and only if there exists  $u\in (Re)^\times$  such that    $\boldsymbol{b}=u\boldsymbol{a}$. 

Equivalently,  $RA=RB$ if and only if there exists  $u\in (Re)^\times$  such that  $B=uA$. 
\end{lemma}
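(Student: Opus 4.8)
The plan is to carry out the argument in the $R^l$ formulation, since the two displayed equivalences are interchangeable: because $\varphi$ is an $R$-module isomorphism with $\varphi(u\boldsymbol{a}) = u\,\varphi(\boldsymbol{a})$ for every $u\in R$, we have $\boldsymbol{b}=u\boldsymbol{a}$ in $R^l$ if and only if $B=uA$ in $S$, and $R\boldsymbol{a}=R\boldsymbol{b}$ if and only if $RA=RB$. Throughout I would use the standing fact that each $a_j$ lies in $Re$ (since $Ra_j\subseteq Re$), so that $ea_j=a_j$ and hence $e\boldsymbol{a}=\boldsymbol{a}$.

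For the ``if'' direction I would argue directly. Suppose $\boldsymbol{b}=u\boldsymbol{a}$ with $u\in(Re)^\times$. Then $R\boldsymbol{b}=Ru\boldsymbol{a}\subseteq R\boldsymbol{a}$ immediately. Conversely, letting $u^{-1}\in Re$ be the inverse of $u$ in the ring $Re$ (whose identity is $e$), I get $u^{-1}\boldsymbol{b}=u^{-1}u\boldsymbol{a}=e\boldsymbol{a}=\boldsymbol{a}$, so $\boldsymbol{a}\in R\boldsymbol{b}$ and $R\boldsymbol{a}=R\boldsymbol{b}$. This direction needs no structure theory beyond $e\boldsymbol{a}=\boldsymbol{a}$.

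For the ``only if'' direction, assume $R\boldsymbol{a}=R\boldsymbol{b}$. I would first record that $\boldsymbol{a}$ and $\boldsymbol{b}$ share the idempotent generator $e$: applying the coordinate projections $\pi_j:R^l\to R$ gives $Ra_j=\pi_j(R\boldsymbol{a})=\pi_j(R\boldsymbol{b})=Rb_j$ for every $j$, whence $\sum_j Rb_j=\sum_j Ra_j=Re$, and by uniqueness of the idempotent generator $e\boldsymbol{b}=\boldsymbol{b}$. Since $\boldsymbol{b}\in R\boldsymbol{a}$ and $\boldsymbol{a}\in R\boldsymbol{b}$, choose $r,r'\in R$ with $\boldsymbol{b}=r\boldsymbol{a}$ and $\boldsymbol{a}=r'\boldsymbol{b}$, and set $u:=re$ and $v:=r'e$ in $Re$ (using commutativity of $R$). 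Then $\boldsymbol{b}=re\boldsymbol{a}=u\boldsymbol{a}$ and $\boldsymbol{a}=v\boldsymbol{b}$, so $\boldsymbol{a}=vu\boldsymbol{a}$, i.e. $(e-vu)\boldsymbol{a}=0$ with $e-vu\in Re$. It remains to upgrade $u$ from a mere element of $Re$ to a unit of $Re$.

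The main obstacle is exactly this upgrade: a coordinatewise cancellation in each $\pi_j$ only yields per-coordinate scalars, whereas I need a single $u$ that works simultaneously in all $l$ coordinates. I would resolve it with the Wedderburn decomposition $Re=\bigoplus_{i\in I}Re_i$, where $e=\sum_{i\in I}e_i$ is the sum of the primitive idempotents occurring in $e$ and each $Re_i$ is a finite field $K_i$. Writing $\boldsymbol{a}^{(i)}=e_i\boldsymbol{a}\in K_i^{\,l}$, the defining property $Re=\sum_j Ra_j$ forces $\boldsymbol{a}^{(i)}\neq 0$ for every $i\in I$. Projecting $(e-vu)\boldsymbol{a}=0$ into the $i$-th component gives the scalar relation $(1_{K_i}-v^{(i)}u^{(i)})\,\boldsymbol{a}^{(i)}=0$ in $K_i^{\,l}$; as $K_i$ is a field and $\boldsymbol{a}^{(i)}\neq 0$, this forces $v^{(i)}u^{(i)}=1_{K_i}$, so $u^{(i)}\in K_i^\times$ for all $i\in I$. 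Hence $u\in(Re)^\times$ and $\boldsymbol{b}=u\boldsymbol{a}$, and transporting through $\varphi$ yields the equivalent statement $B=uA$.
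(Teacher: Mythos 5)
Your proof is correct, and while it shares the paper's basic skeleton it differs in both directions in ways worth noting. In the "only if" direction, both you and the paper start from $\boldsymbol{b}=v\boldsymbol{a}$ and replace $v$ by $u=ve\in Re$; but to see that $u$ is a unit the paper argues in one line that $Re=\sum_i Rb_i=v\sum_i Ra_i=vRe=uRe$, and concludes $u\in(Re)^\times$ from $uRe=Re$, whereas you derive the two-sided relation $(e-vu)\boldsymbol{a}=0$ and then pass to the decomposition $Re=\bigoplus_i Re_i$ into finite fields (Corollary~\ref{cor:1g-Re}) to force $v^{(i)}u^{(i)}=1_{K_i}$ componentwise. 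Both are valid; the paper's is shorter, while yours makes the semisimple mechanism explicit and, via the coordinate projections $\pi_j$, also supplies the justification---left implicit in the paper---that $e$ is the idempotent generator of $R\boldsymbol{b}$ as well. In the "if" direction your argument is genuinely more elementary: from $\boldsymbol{b}=u\boldsymbol{a}$ you simply apply $u^{-1}\in Re$ together with $e\boldsymbol{a}=\boldsymbol{a}$ to get $\boldsymbol{a}=u^{-1}\boldsymbol{b}\in R\boldsymbol{b}$ and hence equality at once, whereas the paper only obtains the inclusion $R\boldsymbol{b}\subseteq R\boldsymbol{a}$ directly and then forces equality by comparing $\mathbb{F}_q$-dimensions of the two idempotent generators through Lemma~\ref{lem:e+f=1}. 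Your route avoids that dimension count entirely, at the price of invoking the field decomposition in the other direction where the paper does not need it.
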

\begin{proof}
Assume that $R\boldsymbol{a}=R\boldsymbol{b}$. 
Then $\boldsymbol{b}=v\boldsymbol{a}$  for some $v\in R$. Let $u=ve\in Re$.
Note that $a_i=r_ie$, where $r_i\in R$, for all $i=1,2,\dots,l$. Then $ua_i=ver_ie=v(r_ie)=va_i=b_i$ for all $i=1,2,\dots,l$. Hence, $\boldsymbol{b}=u\boldsymbol{a}$ and 
\begin{align*}
	Re&=Rb_1+Rb_2+\dots+Rb_l= v(Ra_1+Ra_2+\dots+Ra_l)=vRe=(ve)Re=uRe.
\end{align*}
Since  $u\in Re$ and $Re=uRe$,  we have $u\in (Re)^\times$.

Conversely, assume that  there exists    $u\in (Re)^\times$ such that   $\boldsymbol{b}=u\boldsymbol{a}$.
Then $R\boldsymbol{b}=Ru\boldsymbol{a}\subseteq R\boldsymbol{a}$.  We need to show that 
${\rm dim}_{\mathbb{F}_q}(R\boldsymbol{a})={\rm dim}_{\mathbb{F}_q}(R\boldsymbol{b})$. Let $e^\prime $ be an idempotent generator of $R\boldsymbol{b}$. We have  
\begin{align*}
	Re^\prime&=Rb_1+Rb_2+\dots+Rb_l\\
	         &=R(ua_1)+R(ua_2)+\dots+R(ua_l)\\
	  		 &=u(Ra_1+Ra_2+\dots+Ra_l)\\
	         &=u(Re)=Re
	\end{align*} 
	since $u\in (Re)^\times$.
Hence, by Lemma~\ref{lem:e+f=1}, we have ${\rm dim}_{\mathbb{F}_q}(R\boldsymbol{a})={\rm dim}_{\mathbb{F}_q}(Re)={\rm dim}_{\mathbb{F}_q}(Re^\prime) ={\rm dim}_{\mathbb{F}_q}(R\boldsymbol{b})$. Therefore,  $R\boldsymbol{b}= R\boldsymbol{a}$.
\end{proof}
 
\subsection{Enumeration of $1$-Generator Quasi-Abelian Codes}

First, we focus on the number of  $1$-generator $H$-quasi-abelian codes of a given idempotent generator. Then, the number of $1$-generator $H$-quasi-abelian codes in  $\mathbb{F}_q[G]$ follows.

\begin{proposition}\label{prop:1g-Se}
Let $\{e_1,e_2,\dots, e_r\}$ be a set of primitive idempotents of $R$ and $e=e_1+e_2+\dots+e_r$.
Then the following statements hold.
\begin{enumerate}[$i)$]
	\item  $e_1,e_2,\dots, e_r$ are pairwise orthogonal (non-zero) idempotents of $Se$.
	\item  $e_j$ is the identity of $Se_j$ for all $j=1,2,\dots,r$.
	\item $e$ is the identity of $Se$.
	\item $Se=Se_1\oplus Se_2\oplus\dots\oplus Se_r$.
\end{enumerate}
\end{proposition}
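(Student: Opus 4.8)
The key structural observation driving the whole proof is that $R=\mathbb{F}_q[H]$ sits inside $S=\mathbb{F}_{q^l}[H]$ as a subring sharing the same identity $1=Y^0$, since $\mathbb{F}_q\subseteq\mathbb{F}_{q^l}$. Consequently, every multiplicative relation among the $e_j$ that holds in $R$ continues to hold verbatim in $S$. In particular, because $\{e_1,\dots,e_r\}$ is a subset of a complete set of pairwise orthogonal primitive idempotents of $R$, the relations $e_ie_j=0$ for $i\neq j$ and $e_j^2=e_j$ remain valid in $S$. I would also record at the outset that $S$ is commutative, as $H$ is abelian. These two facts, the inherited orthogonality/idempotency and the commutativity, are all the proof requires; crucially, I do not need the $e_j$ to remain primitive in $S$ (indeed they generally will not), only that they remain orthogonal idempotents.

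With this in hand, parts $i)$–$iii)$ are short. From orthogonality I would compute $e_je = e_j(e_1+\cdots+e_r)=e_j^2=e_j$, which simultaneously shows each $e_j=e_je\in Se$ and that $e$ fixes each $e_j$. Since each $e_j$ is nonzero in $R$, it is nonzero in $S$, and the inherited relations make $e_1,\dots,e_r$ pairwise orthogonal idempotents of $Se$; this is part $i)$. For part $ii)$, given any $se_j\in Se_j$, commutativity of $S$ gives $e_j(se_j)=se_j^2=se_j$, so $e_j$ is the identity of $Se_j$. For part $iii)$, expanding $e^2=\bigl(\sum_i e_i\bigr)\bigl(\sum_j e_j\bigr)$ and discarding the cross terms by orthogonality yields $e^2=\sum_i e_i^2=e$, so $e$ is idempotent, and hence the identity of $Se$.

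For part $iv)$ I would first establish the sum decomposition and then its directness. Writing any $se\in Se$ as $s(e_1+\cdots+e_r)=se_1+\cdots+se_r$, together with the containments $Se_j=Se_je\subseteq Se$ (using $e_j=e_je$), gives $Se=Se_1+\cdots+Se_r$. To see the sum is direct, I would take an element $x$ in $Se_j\cap\sum_{i\neq j}Se_i$, so $x=se_j$ for some $s$ while also $x=\sum_{i\neq j}s_ie_i$; multiplying the latter expression by $e_j$ and invoking orthogonality forces $xe_j=0$, whereas multiplying the former by $e_j$ gives $xe_j=se_j^2=se_j=x$, whence $x=0$. This is the standard orthogonal-idempotent argument for an internal direct sum.

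The only point requiring care, and what I would flag as the conceptual rather than computational heart of the argument, is resisting the temptation to reason about primitivity in $S$. A reader might expect a decomposition into simple components, but over the extension field $\mathbb{F}_{q^l}$ the $e_j$ typically split further, so they are no longer primitive in $S$. The proposition deliberately asserts only orthogonality, idempotency, and the block decomposition $Se=\bigoplus_{j} Se_j$, all of which transfer from $R$ purely formally. Once one recognizes that no primitivity is claimed or needed, every step reduces to the inherited relations $e_ie_j=\delta_{ij}e_j$, and there is no genuine obstacle.
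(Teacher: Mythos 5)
Your proof is correct and follows essentially the same route as the paper: orthogonality and idempotency of the $e_j$ are inherited from $R$ into $S$, membership in $Se$ comes from $e_j=e_je$, the identity claims from $se_j=se_j^2$ and $se=se^2$, and the direct sum from the standard orthogonal-idempotent argument. The paper's version is merely more terse; your added remark that primitivity need not (and does not) survive the base change to $\mathbb{F}_{q^l}$ is accurate but not a different argument.
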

\begin{proof}
	For  $i)$,   it is clear that  $e_1,e_2,\dots, e_r$ are  pairwise orthogonal (non-zero) idempotents in $S$. They are in $Se$ since $e_j=e_je\in Se$ for all $j=1,2,\dots,r$.
	The statements $ii)$ and $iii)$ follow since   $se_j=se_j^2=(se_j)e_j$ for all $se_j\in Se_j$ and $se=se^2=(se)e$ for all $se\in Se$. The last statement can be verified using $i)$.
\end{proof}

\begin{corollary}\label{cor:1g-Re}
Let $\{e_1,e_2,\dots, e_r\}$ be a set of primitive idempotents of $R$ and $e=e_1+e_2+\dots+e_r$.
Then the following statements hold.
\begin{enumerate}[$i)$] 
	\item $e_1,e_2,\dots, e_r$ are pairwise orthogonal (non-zero) idempotents of $Re$.
	\item $e_j$ is the identity of $Re_j$ for all $j=1,2,\dots,r$.
	\item $e$ is the identity of $Re$.
	\item $Re=Re_1\oplus Re_2\oplus\dots\oplus Re_r$, where $Re_j$ is isomorphic to an extension field of $\mathbb{F}_q$ for all $j=1,2,\dots,r$.
\end{enumerate}
\end{corollary}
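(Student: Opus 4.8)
The plan is to observe that statements $i)$ through the direct-sum decomposition in $iv)$ are, word for word, the four conclusions of Proposition~\ref{prop:1g-Se} carried out inside the smaller ring $R$ rather than inside $S$. Concretely, the $e_j$ are pairwise orthogonal nonzero idempotents of $R$ by hypothesis, and they lie in $Re$ because $e_j=e_je$; each $e_j$ (respectively $e$) is the identity of $Re_j$ (respectively $Re$) since $re_j=re_j^2=(re_j)e_j$ for every $re_j\in Re_j$ (respectively $re=re^2=(re)e$); and orthogonality together with $e=e_1+\dots+e_r$ forces the internal direct sum $Re=Re_1\oplus\dots\oplus Re_r$. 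Since these arguments used nothing about $S$ beyond the ring axioms and the idempotent relations, I would simply reproduce them verbatim, or equivalently note that the Proposition applied with $\mathbb{F}_q$ in place of $\mathbb{F}_{q^l}$ (the case $l=1$, so $S=R$) yields exactly these parts.

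What remains, and what I regard as the only substantive point, is the final assertion of $iv)$: that each summand $Re_j$ is isomorphic to a field extension of $\mathbb{F}_q$. Here I would invoke the two standing structural features of $R=\mathbb{F}_q[H]$. First, $R$ is commutative because $H$ is abelian; second, $R$ is semisimple because the blanket hypothesis $\gcd(q,|H|)=1$ places us in the Maschke setting. By the Wedderburn--Artin structure theorem for a commutative semisimple $\mathbb{F}_q$-algebra, $R$ is a direct product of fields whose factors are precisely the minimal ideals $Re_j$ attached to the primitive idempotents $e_j$. Thus each $Re_j$ is a field, and since it contains the copy $\mathbb{F}_qe_j\cong\mathbb{F}_q$ as its prime subfield, it is a finite extension of $\mathbb{F}_q$, as claimed.

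If one prefers a self-contained argument over citing Wedderburn--Artin, the plan is instead to note that $Re_j$ is a finite-dimensional commutative $\mathbb{F}_q$-algebra with identity $e_j$, and that primitivity of $e_j$ makes $Re_j$ a minimal ideal, hence a ring with no proper nonzero ideals. To conclude it is a field it then suffices to show every nonzero $x\in Re_j$ is invertible, which follows because multiplication by $x$ is an injective (no nonzero ideals means no zero divisors supported in this simple component) and therefore surjective $\mathbb{F}_q$-linear endomorphism of the finite-dimensional space $Re_j$. The main obstacle is thus entirely this field-theoretic step; parts $i)$--$iii)$ and the decomposition in $iv)$ are a mechanical transcription of the Proposition.
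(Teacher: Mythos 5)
Your proposal is correct and matches the paper's intent: the paper states this corollary without proof, immediately after Proposition~\ref{prop:1g-Se}, precisely because parts $i)$--$iii)$ and the direct-sum decomposition follow verbatim from the same computations with $R$ in place of $S$, and the fact that each $Re_j$ is a finite extension field of $\mathbb{F}_q$ is the standard semisimple decomposition of $R=\mathbb{F}_q[H]$ under $\gcd(q,|H|)=1$ already recalled in Section~\ref{sec:pre}. Your Wedderburn--Artin (or minimal-ideal) justification of that last clause is exactly the background the paper is implicitly invoking.
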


Let $\Omega=\left\{\sum_{j=1}^rA_j\mid     A_j\in (Se_j)^*\right\} \subset Se.$

\begin{lemma} \label{lem:1g-1}
Let $A=\sum_{i=1}^l\alpha_ia_i\in S$, where $a_i\in R$, and  let $b\in R$.
Then $RA\subseteq Sb$ if and only if $Ra_1+Ra_2+\dots+Ra_l\subseteq Rb$.
\end{lemma}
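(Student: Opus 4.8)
The plan is to pull everything back through the $R$-module isomorphism $\varphi$ and to exploit that $\{\alpha_1,\dots,\alpha_l\}$ serves simultaneously as an $\mathbb{F}_q$-basis of $\mathbb{F}_{q^l}$ and as a free $R$-basis of $S$. First I would record the two structural facts that make the equivalence transparent. Since $S=\mathbb{F}_{q^l}[H]$ and each scalar in $\mathbb{F}_{q^l}$ expands uniquely in the basis $\{\alpha_i\}$, every element of $S$ is written uniquely as $\sum_{i=1}^l \alpha_i s_i$ with $s_i\in R$; in other words $S=\bigoplus_{i=1}^l \alpha_i R$, and $\varphi$ is precisely this coordinate isomorphism. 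Because $H$ is abelian, $S$ is commutative, so the fixed element $b\in R$ commutes with each $\alpha_i$ and multiplication by $b$ respects this decomposition.

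Next I would describe both sides explicitly in these coordinates. For $RA$, writing $A=\sum_i \alpha_i a_i$ gives $rA=\sum_i \alpha_i (ra_i)$ for every $r\in R$, so that $\varphi^{-1}(RA)=\{(ra_1,\dots,ra_l)\mid r\in R\}$. For $Sb$, an arbitrary $s=\sum_i \alpha_i s_i\in S$ yields $sb=\sum_i \alpha_i (s_i b)$, and as the $s_i$ range independently over $R$ the coordinate $s_i b$ ranges over the ideal $Rb$. By uniqueness of the $\alpha_i$-expansion this shows that $\varphi^{-1}(Sb)$ is exactly the $R$-submodule of $R^l$ consisting of all tuples with every coordinate in $Rb$.

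With both sides in coordinate form, the equivalence reduces to elementary module bookkeeping. The inclusion $RA\subseteq Sb$ holds if and only if $(ra_1,\dots,ra_l)$ lies in that submodule for every $r\in R$, i.e. if and only if $ra_i\in Rb$ for all $r\in R$ and all $i$. Taking $r=1$ gives $a_i\in Rb$ for every $i$, and conversely $a_i\in Rb$ forces $ra_i\in Rb$ since $Rb$ is an ideal; hence $RA\subseteq Sb$ is equivalent to the single condition that $a_i\in Rb$ for all $i$. The same ideal property shows that $Ra_1+\cdots+Ra_l\subseteq Rb$ holds if and only if $Ra_i\subseteq Rb$ for each $i$, which in turn is equivalent to $a_i\in Rb$ for each $i$. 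Since both inclusions are equivalent to this common condition, they are equivalent to each other.

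I do not anticipate a serious obstacle. The only point that needs genuine care is the coordinate description of $Sb$, which hinges on $b$ lying in $R$ rather than in $S$: this is what guarantees that multiplication by $b$ acts coordinatewise and does not mix the basis elements $\alpha_i$. Everything else follows directly from $\varphi$ being an $R$-module isomorphism together with $Rb$ being an ideal of $R$.
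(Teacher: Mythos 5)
Your proof is correct and rests on the same key observation as the paper's: since $b\in R$ and $\{\alpha_1,\dots,\alpha_l\}$ is a free $R$-basis of $S$, multiplication by $b$ acts coordinatewise, so membership in $Sb$ is read off from the unique $\alpha_i$-expansion. The paper packages this as "$A=Bb$, expand $B$, compare coefficients to get $a_i=bb_i$" rather than describing $\varphi^{-1}(Sb)=(Rb)^l$ up front, but the argument is essentially identical.
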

\begin{proof}
	Assume that $RA\subseteq Sb$. Then $A=Bb$ for some $B\in S$. Write $B=\sum_{i=1}^l\alpha_ib_i$, where $b_i\in R$.
	Then $a_i=bb_i$ for all $i=1,2,\dots,l$. Hence,  we have 
	\begin{align*}
		\sum_{i=1}^l r_ia_i=\sum_{i=1}^l r_ibb_i=(\sum_{i=1}^l r_ib_i)b\in Rb.
	\end{align*}
for all $\sum_{i=1}^l r_ia_i\in Ra_1+Ra_2+\dots+Ra_l.$
	
	Conversely, it suffices to show that $A\in Sb$. Since  $Ra_1+Ra_2+\dots+Ra_l\subseteq Rb$, we have $a_i\in Rb$ for all $i=1,2,\dots,l$. Then, for each $i=1,2,\dots,l$,  there exists $r_i\in R$ such that  $a_i=r_ib$. Hence, 
	\begin{align*}
		A=\sum_{i=1}^l \alpha_i a_i= \sum_{i=1}^l \alpha_i r_ib= (\sum_{i=1}^l \alpha_i r_i)b\in Sb.
	\end{align*}
\end{proof}

\begin{lemma} \label{lem:1g-2}
Let $A=\sum_{i=1}^l\alpha_ia_i \in Se $, where $a_i\in R$.
Then $A\in \Omega$ if and only if $Re=Ra_1+Ra_2+\dots+Ra_l$.
\end{lemma}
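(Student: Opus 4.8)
The plan is to translate both conditions into the same componentwise statement via the two direct-sum decompositions already available. By Corollary~\ref{cor:1g-Re} we have $Re=Re_1\oplus\cdots\oplus Re_r$ with each $Re_j$ a field, and by Proposition~\ref{prop:1g-Se} we have $Se=Se_1\oplus\cdots\oplus Se_r$ with $e_j$ the identity of $Se_j$. First I would record a compatibility fact: since $\{\alpha_1,\dots,\alpha_l\}$ is an $\mathbb{F}_q$-basis of $\mathbb{F}_{q^l}$, one has $S=\bigoplus_{i=1}^l\alpha_iR$ and, after multiplying by $e$ (resp.\ by $e_j$), $Se=\bigoplus_{i=1}^l\alpha_i(Re)$ and $Se_j=\bigoplus_{i=1}^l\alpha_i(Re_j)$. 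In particular, writing $A=\sum_{i=1}^l\alpha_ia_i\in Se$ and comparing with $Se=\bigoplus_i\alpha_i(Re)$ forces $a_i\in Re$ for every $i$, so the inclusion $Ra_1+\cdots+Ra_l\subseteq Re$ holds automatically and only the reverse inclusion is at stake.

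Next I would unpack each side into a statement about the components $a_ie_j$. Because $Se=\bigoplus_jSe_j$ and $Ae_j\in Se_j$, membership $A\in\Omega$ means exactly that $Ae_j\neq0$ for all $j$. Using $Ae_j=\sum_{i=1}^l\alpha_i(a_ie_j)$ together with the direct sum $Se_j=\bigoplus_i\alpha_i(Re_j)$, the uniqueness of this expression gives that $Ae_j\neq0$ if and only if $a_ie_j\neq0$ for some $i$. On the other side, the ideal $I:=Ra_1+\cdots+Ra_l\subseteq Re$ equals $Re$ if and only if $Ie_j=Re_j$ for every $j$; and since $Re_j$ is a field with identity $e_j$, any nonzero $a_ie_j$ is invertible in $Re_j$, whence $Ra_ie_j=Re_j$. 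Thus $Ie_j=\sum_iRa_ie_j=Re_j$ precisely when some $a_ie_j\neq0$, and $Ie_j=0$ otherwise. Comparing the two reformulations, both $A\in\Omega$ and $Re=Ra_1+\cdots+Ra_l$ are equivalent to the single condition that for every $j$ there is an $i$ with $a_ie_j\neq0$, which proves the lemma.

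The routine verifications are the direct-sum identities, which follow from the basis property and the fact that each $e_j$ is a central idempotent; I expect the only point requiring care to be the compatibility of the two gradings, namely decomposing $Se$ simultaneously along the $\alpha_i$'s and along the $e_j$'s, so that projecting $A$ onto $Se_j$ and then reading off its $\alpha_i$-coordinates really does return $a_ie_j$. The other genuinely used input is that each $Re_j$ is a field (Corollary~\ref{cor:1g-Re}), which is what lets me upgrade ``$a_ie_j\neq0$'' to ``$Ra_ie_j=Re_j$'' and thereby detect the full component $Re_j$ from a single nonzero coordinate.
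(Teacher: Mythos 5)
Your proof is correct, but it follows a genuinely different route from the paper's. The paper's argument pivots on Lemma~\ref{lem:1g-1} (the equivalence $RA\subseteq Sb \iff Ra_1+\cdots+Ra_l\subseteq Rb$) and proceeds by contradiction in both directions: if the ideal $Ra_1+\cdots+Ra_l$ were proper in $Re$ it would sit inside some $R(e-e_j)$, forcing $A_j=Ae_j\in S(e-e_j)$ to vanish, and conversely a vanishing component $A_j$ forces $Re\subseteq R(e-e_j)$. You instead bypass Lemma~\ref{lem:1g-1} entirely and reduce both sides of the equivalence to the single componentwise condition ``for each $j$ there is an $i$ with $a_ie_j\neq 0$,'' using the compatibility of the two gradings $S=\bigoplus_i\alpha_iR$ and $Se=\bigoplus_jSe_j$ together with the fact that each $Re_j$ is a field (Corollary~\ref{cor:1g-Re}). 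Your identification $A_j=Ae_j=\sum_i\alpha_i(a_ie_j)$ and the observation that a nonzero $a_ie_j$ is a unit in $Re_j$ are both sound, and the direct-sum claims you flag as routine really are routine (they follow from the basis property of $\{\alpha_i\}$ and the orthogonality of the $e_j$). What your approach buys is a symmetric, contradiction-free argument that makes the common combinatorial content of the two conditions explicit; what the paper's approach buys is reuse of Lemma~\ref{lem:1g-1}, which it has already established and which it needs elsewhere, so the lemma falls out with almost no extra bookkeeping about the $\alpha_i$-coordinates.
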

\begin{proof}
First, we note that $RA\subseteq Se$ since $A\in Se$. Then 
	$Ra_1+Ra_2+\dots+Ra_l\subseteq Re$
by  Lemma~\ref{lem:1g-1}.
	
	Assume that $A\in \Omega$. Then $A=A_1+A_2+\dots+A_r$, where $A_j\in (Se_j)^*$. We have $Ae_j=A_j\ne 0$ for all $j=1,2,\dots,r$. Suppose that  $Ra_1+Ra_2+\dots+Ra_l\subsetneq Re$. By Corollary~\ref{cor:1g-Re}, we have $Re=Re_1\oplus Re_2\oplus\dots\oplus Re_r$. Then 
	\[Ra_1+Ra_2+\dots+Ra_l\subseteq \widehat{Re_j}=R(e-e_j)\]
 for some $j\in\{1,2,\dots,r\}$, where 
$\widehat{Re_j}=Re_1\oplus\dots \oplus Re_{j-1}\oplus Re_{j+1}\oplus\dots\oplus Re_r.$
By Lemma~\ref{lem:1g-1},  we have
	\[0\ne A_j=Ae_j \in RA \subseteq S(e-e_j),\]
a contradiction.  Therefore, $Ra_1+Ra_2+\dots+Ra_l= Re$.

Conversely, assume that  $Re=Ra_1+Ra_2+\dots+Ra_l$. Then  $RA\subseteq Se$ by Lemma~\ref{lem:1g-1}. 
Since $A\in Se$, by Theorem~\ref{prop:1g-Se}, we have $A=A_1+A_2+\dots+A_r$, where $A_j\in Se_j$ for all $j=1,2,\dots, r$.
Suppose that $A_j=0$ for some $j\in \{1,2,\dots,r\}$. Then $RA=\widehat{RA_j}\subseteq \widehat{Se_j}=S(e-e_j) $.  By Lemma~\ref{lem:1g-1}, we have 
\[Re=Ra_1+Ra_2+\dots+Ra_l\subseteq R(e-e_j) \]  
which is a contradiction. Hence, $A_j\in (Se_j)^*$ for all $j=1,2,\dots,r$.
\end{proof}

\begin{corollary}\label{cor:1g-1}
Let $A=\sum_{i=1}^l\alpha_ia_i \in Se_j $, where $a_i\in R$.
Then $A\in (Se_j)^*$ if and only if $Re_j=Ra_1+Ra_2+\dots+Ra_l$.
\end{corollary}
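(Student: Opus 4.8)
The plan is to recognize this corollary as the single-idempotent specialization of Lemma~\ref{lem:1g-2}. The lemma is stated for an arbitrary set $\{e_1,\dots,e_r\}$ of primitive idempotents of $R$ and $e=e_1+\dots+e_r$, and crucially it does not require this set to exhaust all primitive idempotents of $R$. Hence I would apply it with $r=1$ and with the chosen primitive idempotent being precisely $e_j$, so that in the notation of the lemma we have $e=e_j$.

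With this choice I would first check that the objects in Lemma~\ref{lem:1g-2} collapse to those in the corollary. The hypothesis $A\in Se$ of the lemma becomes $A\in Se_j$, which is exactly the hypothesis here. The set $\Omega$ is defined as $\Omega=\{\sum_{k=1}^r A_k \mid A_k\in (Se_k)^*\}$; taking $r=1$ and the single idempotent to be $e_j$ forces
\[
\Omega=\{A_1\mid A_1\in (Se_j)^*\}=(Se_j)^*.
\]
Therefore the statement $A\in\Omega$ reads $A\in (Se_j)^*$, and the condition $Re=Ra_1+Ra_2+\dots+Ra_l$ reads $Re_j=Ra_1+Ra_2+\dots+Ra_l$. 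Substituting these into the equivalence supplied by Lemma~\ref{lem:1g-2} yields the desired biconditional immediately.

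I do not anticipate a genuine obstacle: the argument is a verbatim specialization, and no new module-theoretic work is needed beyond confirming that Lemma~\ref{lem:1g-2} is legitimately applicable to a singleton subset of the primitive idempotents. The only point warranting a sentence of justification is exactly that legitimacy, namely that $\{e_j\}$ qualifies as a set of primitive idempotents in the sense of the lemma's hypotheses, after which the conclusion follows by the identification $\Omega=(Se_j)^*$ established above.
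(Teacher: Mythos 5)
Your proposal is correct and matches the paper's intent: the paper states this corollary without proof immediately after Lemma~\ref{lem:1g-2}, treating it as exactly the specialization you describe, namely applying the lemma to the singleton set $\{e_j\}$ of primitive idempotents so that $e=e_j$ and $\Omega=(Se_j)^*$. Your one point of care --- that the lemma's hypotheses permit an arbitrary (hence singleton) set of primitive idempotents --- is the right thing to check, and it holds.
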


Let $k_j$ denote the  $\mathbb{F}_q$-dimension of $e_j$.   Then $Re_j$ is isomorphic to a finite field of $q^{k_j}$ elements. 

For $j=1,2,\dots,r$, define an equivalence relation on $(Se_j)^*$ by
\begin{align*}A\sim B \iff \exists u\in (Re_j)^\times \text{ such that } A=uB.
\end{align*}

For $A\in (Se_j)^*$, denote by $[{A}]$ the equivalence class of $A$ and let $[{(Se_j)^*}]=\{[{A}]\mid A\in (Se_j)^*\}.$

\begin{lemma}For $A\in (Se_j)^*$, we have $|[{A}]|=q^{k_j}-1$.
\end{lemma}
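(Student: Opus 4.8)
The plan is to view the equivalence class $[A]$ as an orbit under a group action and then apply the orbit–stabilizer theorem. Since the group algebra of an abelian group is commutative, $Se_j$ is a commutative ring with identity $e_j$ by Proposition~\ref{prop:1g-Se}, and left multiplication by elements of $(Re_j)^\times$ defines an action of the group $(Re_j)^\times$ on the set $(Se_j)^*$. Indeed, for $u\in(Re_j)^\times$ there is $v\in Re_j$ with $uv=e_j$; as $e_j$ is the identity of $Se_j$, the element $u$ is also a unit of $Se_j$, so $uA\in(Se_j)^*$ whenever $A\in(Se_j)^*$, and the group-action axioms follow from associativity together with $e_jA=A$. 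Under this action the orbit of $A$ is exactly $[A]=\{uA\mid u\in(Re_j)^\times\}$, matching the stated equivalence class.

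By the orbit–stabilizer theorem, $|[A]|=|(Re_j)^\times|\,/\,|\mathrm{Stab}(A)|$. Since $Re_j$ is isomorphic to the field $\mathbb{F}_{q^{k_j}}$ of $q^{k_j}$ elements by Corollary~\ref{cor:1g-Re}, we have $|(Re_j)^\times|=q^{k_j}-1$. It therefore remains only to show that the stabilizer of $A$ is trivial, i.e. equal to $\{e_j\}$.

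To this end I would suppose $uA=A$ for some $u\in(Re_j)^\times$ and rewrite this as $(u-e_j)A=0$, using that $e_j$ is the identity of $Se_j$. The key point is that $u-e_j$ lies in $Re_j$: if $u\neq e_j$, then $u-e_j$ is a \emph{nonzero} element of the field $Re_j$, hence a unit of $Re_j$, and therefore a unit of $Se_j$ by the same transfer argument as above (the two rings share the identity $e_j$). Multiplying $(u-e_j)A=0$ by this inverse forces $A=0$, contradicting $A\in(Se_j)^*$. Hence $u=e_j$, the stabilizer is trivial, and $|[A]|=q^{k_j}-1$.

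The step to watch is the cancellation: $Se_j$ is in general \emph{not} a field (it is essentially $\mathbb{F}_{q^l}\otimes_{\mathbb{F}_q}\mathbb{F}_{q^{k_j}}$ and may have zero divisors), so one cannot cancel $A$ from $uA=A$ merely because $A\neq 0$. What rescues the argument is that the scalar $u-e_j$ already sits inside the field $Re_j$, where nonzero elements are automatically invertible, and that invertibility carries over to $Se_j$ via the common identity $e_j$. This is the only delicate point; everything else is a routine orbit–stabilizer count.
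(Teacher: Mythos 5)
Your proof is correct, and it reaches the count by a genuinely cleaner route on the key step. Both arguments ultimately reduce to showing that the map $u\mapsto uA$ from $(Re_j)^\times$ onto $[A]$ is injective (your trivial-stabilizer claim is exactly this, repackaged via orbit--stabilizer). The paper proves injectivity by expanding $A=\sum_{i=1}^l\alpha_ia_i$ in the basis $\{\alpha_i\}$, deducing $a_i(u_1-u_2)=0$ for each $i$, and then invoking Corollary~\ref{cor:1g-1} to write $e_j=\sum r_ia_i$ and conclude $e_j(u_1-u_2)=0$. You instead observe that $u-e_j$ lies in $Re_j$, which is a field of $q^{k_j}$ elements by Corollary~\ref{cor:1g-Re}, so a nonzero $u-e_j$ is invertible there; multiplying $(u-e_j)A=0$ by that inverse and using that $e_j$ is the identity of $Se_j$ (Proposition~\ref{prop:1g-Se}) forces $A=0$, a contradiction. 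This bypasses both the coordinate expansion and Corollary~\ref{cor:1g-1} entirely, at the price of nothing: all the facts you use are already established before the lemma. Your cautionary remark about $Se_j$ not being a field (so one cannot cancel $A$ directly) is exactly the right thing to flag, and your resolution -- that the scalar being cancelled lives in the subfield $Re_j$ sharing the identity $e_j$ with $Se_j$ -- is sound. The only cosmetic difference is that the paper counts $|[A]|=|(Re_j)^\times|$ via an explicit bijection rather than orbit--stabilizer; the content is identical.
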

\begin{proof}
	Define $\rho : (Re_j)^\times\to [{A}]$, 
	\[u\mapsto uA.\]
From the definition of $\sim$, $\rho$ is a well-defined surjective map.
For $u_1,u_2\in (Re_j)^\times$, if $u_1A=u_2A$, then $(u_1-u_2)A=0$. Write $A=\sum_{i=1}^l{\alpha_ia_i}$, where $a_i\in R$. 
Then $a_i(u_1-u_2)=0$ for all $i=1,2,\dots,l$. Since $A\in (Se_j)^*$, by Corollary~\ref{cor:1g-1}, we can write $e_j =\sum_{i=1}^{i}r_ia_i$, where  $r_i\in R$.  Hence, 
\begin{align*}
e_j(u_1-u_2)=(\sum_{i=1}^{i}r_ia_i)(u_1-u_2) =\sum_{i=1}^{i}r_ia_i(u_1-u_2)=0\in Re_j.
\end{align*} 
Since $e_j$ is the identity of $Re_j$, it follows that $u_1=u_2\in (Re_j)^\times.$
Therefore, $\rho$ is a bijection.
\end{proof}

\begin{corollary}	  For each $i=1,2,\dots,r$, we have 
	\[ |[{(Se_j)^*}]|=\frac{|(Se_j)^*|}{|[{A}]|}=\frac{q^{lk_j}-1}{q^{k_j}-1}.\]
Let $[{\Omega}]=\displaystyle\prod_{j=1}^r [{(Se_j)^*}].$
  Then  $|[{\Omega}]|=\displaystyle\prod_{j=1}^r \frac{q^{lk_j}-1}{q^{k_j}-1}.$
\end{corollary}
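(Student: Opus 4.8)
The plan is to combine a short orbit-counting argument with a single dimension computation for $Se_j$, and then read off the product formula. First I would note that the equivalence relation $\sim$ partitions the finite set $(Se_j)^*$ into disjoint classes, and by the preceding lemma every class has exactly $q^{k_j}-1$ elements. Hence the number of classes is the quotient
\[ |[(Se_j)^*]| = \frac{|(Se_j)^*|}{|[A]|} = \frac{|(Se_j)^*|}{q^{k_j}-1}, \]
so the whole first assertion reduces to establishing that $|(Se_j)^*| = q^{lk_j}-1$, equivalently $|Se_j| = q^{lk_j}$.

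To compute $|Se_j|$ I would exploit the $R$-module isomorphism $\varphi\colon R^l\to S$. Since $\varphi$ is $R$-linear and $e_j\in R$, right multiplication by $e_j$ intertwines with $\varphi$; concretely, $\varphi(a_1 e_j,\dots,a_l e_j) = \bigl(\sum_i \alpha_i a_i\bigr)e_j = \varphi(a_1,\dots,a_l)\,e_j$. Thus $\varphi$ restricts to an isomorphism $(Re_j)^l = R^l e_j \xrightarrow{\sim} Se_j$, giving $|Se_j| = |Re_j|^l$. By Corollary~\ref{cor:1g-Re}$(iv)$, $Re_j$ is isomorphic to an extension field of $\mathbb{F}_q$, and since $k_j=\dim_{\mathbb{F}_q}(Re_j)$ we have $|Re_j|=q^{k_j}$, whence $|Se_j| = q^{lk_j}$. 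Removing the zero element yields $|(Se_j)^*| = q^{lk_j}-1$, and substituting into the display above gives $|[(Se_j)^*]| = (q^{lk_j}-1)/(q^{k_j}-1)$.

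For the last claim, since $[\Omega]$ is defined as the Cartesian product $\prod_{j=1}^r[(Se_j)^*]$, its cardinality is the product of the cardinalities of the factors, and the formula $|[\Omega]| = \prod_{j=1}^r (q^{lk_j}-1)/(q^{k_j}-1)$ follows immediately.

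I expect the dimension count for $Se_j$ to carry essentially all the content here; the orbit counting and the product formula are routine once it is in place. The one point needing care is the justification that $\varphi$ restricts to an isomorphism $(Re_j)^l\cong Se_j$ — this rests on $\varphi$ being $R$-linear together with $e_j$ lying in $R$, so that multiplication by the idempotent is transported componentwise. With that observation the reduction $|Se_j|=|Re_j|^l$ is clean, and Corollary~\ref{cor:1g-Re}$(iv)$ supplies the field structure that pins down $|Re_j|=q^{k_j}$.
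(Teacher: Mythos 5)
Your proposal is correct and follows the same route the paper intends: the displayed quotient $|[(Se_j)^*]| = |(Se_j)^*|/|[A]|$ is exactly the orbit count via the preceding lemma, and the remaining ingredient $|(Se_j)^*| = q^{lk_j}-1$ is the dimension fact the paper leaves implicit (it also follows from Proposition~\ref{prop:1g-Sej}$(iv)$, since $|Se_j| = (q^{ld_j})^{s_j} = q^{lk_j}$). Your justification via the restriction of $\varphi$ to $(Re_j)^l = R^l e_j$ is a clean and valid way to supply that detail.
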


The number of   $1$-generator quasi-abelian  codes sharing a idempotent has been  determined in \cite[Corollary 6.1]{JL2014}. Here, an alternative proof using a different technique is provided. 
\begin{theorem}\label{num}
	Let $\mathfrak{C}$ denote the set of all  $1$-generator $H$-quasi-abelian  codes in  $\mathbb{F}_q[G]$ with idempotent generator  $e$. 
	Then there exists a one-to-one correspondence between $[{\Omega}]$ and $\mathfrak{C}$. Hence, the number of $1$-generator quasi-abelian  codes having $e$ as  their idempotent generator  is
	\[\prod_{j=1}^r \frac{q^{lk_j}-1}{q^{k_j}-1}.\]
\end{theorem}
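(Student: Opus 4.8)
The plan is to build the bijection $\mathfrak{C}\leftrightarrow[{\Omega}]$ by decomposing each element of $\Omega$ over the orthogonal primitive idempotents $e_1,\dots,e_r$ and then transporting the code‑equality criterion of Lemma~\ref{lem:Ra=Rb} into a componentwise statement. First I would record that every code in $\mathfrak{C}$ is of the form $RA$ with $A=\varphi(\boldsymbol{a})\in S$ whose idempotent generator is $e$, which by Lemma~\ref{lem:1g-2} is exactly the condition $A\in\Omega$; hence $A\mapsto RA$ is a well-defined surjection $\Omega\to\mathfrak{C}$. For $A\in\Omega$, Proposition~\ref{prop:1g-Se} supplies the unique decomposition $A=A_1+\cdots+A_r$ with $A_j=Ae_j\in(Se_j)^*$, and I set $\Psi(A)=([{A_1}],\dots,[{A_r}])\in[{\Omega}]$; this map is also surjective, since any tuple of classes is realized by the sum of chosen representatives.

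The heart of the argument is the equivalence $RA=RB\iff\Psi(A)=\Psi(B)$ for $A,B\in\Omega$. By Lemma~\ref{lem:Ra=Rb}, $RA=RB$ holds if and only if $B=uA$ for some $u\in(Re)^\times$. Here I would invoke the ring direct sum $Re=Re_1\oplus\cdots\oplus Re_r$ of Corollary~\ref{cor:1g-Re}, under which $(Re)^\times=\prod_{j=1}^r(Re_j)^\times$, every unit written uniquely as $u=u_1+\cdots+u_r$ with $u_j=ue_j\in(Re_j)^\times$. Expanding $uA$ and using orthogonality $e_je_k=0$ for $j\ne k$ in the commutative ring gives $uA=\sum_{j=1}^r u_jA_j$ with $u_jA_j\in Se_j$; matching the $Se_j$-components, $B=uA$ is equivalent to $B_j=u_jA_j$ for every $j$, that is, $A_j\sim B_j$ in each $(Se_j)^*$. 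Thus $RA=RB$ if and only if $[{A_j}]=[{B_j}]$ for all $j$, which is precisely $\Psi(A)=\Psi(B)$.

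With this equivalence established, the two surjections $A\mapsto RA$ and $A\mapsto\Psi(A)$ have identical fibers, so they induce mutually inverse bijections $\mathfrak{C}\leftrightarrow[{\Omega}]$, which is the claimed one-to-one correspondence. The cardinality assertion then follows at once from the already-computed value $|[{\Omega}]|=\prod_{j=1}^r\frac{q^{lk_j}-1}{q^{k_j}-1}$.

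I expect the main obstacle to be the componentwise description of the unit action in the middle paragraph: verifying that a global unit $u\in(Re)^\times$ acts on $A$ by scaling the $j$th component $A_j$ by the local unit $u_j\in(Re_j)^\times$, and conversely that any assembly of local units is a global unit. This is exactly where the orthogonality of the primitive idempotents and the ring direct-sum structure of Corollary~\ref{cor:1g-Re} are indispensable; once it is in place, the passage between the code equality of Lemma~\ref{lem:Ra=Rb} and the product equivalence relation defining $[{\Omega}]$ is routine.
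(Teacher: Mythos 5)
Your proposal is correct and follows essentially the same route as the paper: both arguments reduce code equality to the unit condition of Lemma~\ref{lem:Ra=Rb}, decompose over the orthogonal primitive idempotents via Proposition~\ref{prop:1g-Se} and Corollary~\ref{cor:1g-Re}, and match $Se_j$-components to identify the equivalence classes. The only difference is packaging --- you exhibit two surjections from $\Omega$ with common fibers, while the paper defines the map $[{\Omega}]\to\mathfrak{C}$ directly and checks well-definedness, injectivity, and surjectivity --- which is a presentational rather than a mathematical distinction.
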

\begin{proof}
	Define $\sigma: [{\Omega}]\to \mathfrak{C}$,
	\[([{A_1}],[{A_2}],\dots,[{A_r}]) \mapsto R\boldsymbol{a},\]
	where $A:=A_1+A_2+\dots+A_r\in Se$ is viewed as $A=\sum_{i=1}^l\alpha_ia_i$ and $\boldsymbol{a}:=(a_1,a_2,\dots,a_l)$.
	
	Since $A_j\in (Se_j)^*$ for all $j=1,2,\dots, r$, we have $A\in \Omega$. Then $Re=Ra_1+Ra_2+\dots+Ra_l$ by Lemma~\ref{lem:1g-2}, and hence, $R\boldsymbol{a}$ is a $1$-generator quasi-abelian  code with idempotent generator $e$, i.e., $R\boldsymbol{a}\in \mathfrak{C}$. 
	
	For  $([{A_1}],[{A_2}],\dots,[{A_r}])=([{B_1}],[{B_2}],\dots,[{B_r}])\in [{\Omega}]$, there exists $u_j\in (Re_j)^\times$ such that $A_j=u_jB_j$ for all $j=1,2,\dots,r$. Let   $u:=u_1+u_2+\dots+u_r$.
	Then 
	\[ u(u_1^{-1}+u_2^{-1}+\dots+u_r^{-1})=e_1+e_2+\dots+e_r=e\]
	 is the identity of $Re$ (see  Corollary~\ref{cor:1g-Re}), where $u_j^{-1}$ refers to the inverse of $u_j$ in $Re_j$. Hence, $u$ is a unit in  $(Re)^\times$.
  Let $A:=\sum_{j=1}^rA_j$ and $B:=\sum_{j=1}^rB_j$. Then \[A=\sum_{j=1}^ru_jB_j=uB.\]  Hence, $R\boldsymbol{a}=R\boldsymbol{b}$ by Lemma~\ref{lem:Ra=Rb}. Therefore, $\sigma$ is a well-defined map.
	
		For  $([{A_1}],[{A_2}],\dots,[{A_r}]),([{B_1}],[{B_2}],\dots,[{B_r}])\in [{\Omega}]$, if $R\boldsymbol{a}=R\boldsymbol{b}$, then, by Lemma~\ref{lem:Ra=Rb},  there exists $u\in (Re)^\times$ such that 
	$A=uB$. Then  $A_j=uB_j=ue_jB_j$ since $e_j$ is the identity of $Se_j$  by Proposition~\ref{prop:1g-Se}. 
	Since $A_j\in (Se_j)^*$, $ue_j$ is a non-zero in $Re_j$ which is a finite field. Thus  $ue_j$ is a unit in $(Re_j)^\times$.
Hence, \[([{A_1}],[{A_2}],\dots,[{A_r}])=([{B_1}],[{B_2}],\dots,[{B_r}])\]
 which implies that $\sigma$ is an injective map.

To verify that $\sigma$ is surjective, let $R\boldsymbol{a}\in \mathfrak{C}$, where $\boldsymbol{a}=(a_1,a_2,\dots,a_l)$. Then $Re=Ra_1+Ra_2+\dots+Ra_l$. Hence, by Lemma~\ref{lem:1g-2}, we conclude that \[A:=\sum_{i=1}^l\alpha_i a_i \in \Omega.\] Write $A= \sum_{j=1}^rA_j$, where $A_j\in (Se_j)^*$. Then  $([{A_1}],[{A_2}],\dots,[{A_r}])\in [{\Omega}],$  and hence, \[\sigma(([{A_1}],[{A_2}],\dots,[{A_r}]))= R\boldsymbol{a}.\]
\end{proof}

\subsection{The Generators for $1$-Generator Quasi-Abelian Codes} 

In this subsection, we establish an algorithm to find all $1$-generator $H$-quasi-abelian codes in $\mathbb{F}_q[G]$. Note that every idempotent in $R:=\mathbb{F}_q[H]$ can be written as a sum of primitive idempotents in $R$. Hence, it is sufficient to study   $H$-quasi-abelian codes for a given idempotent generator.

Let $e=e_1+e_2+\dots+e_r$ be an idempotent in $R$, where, for $j=1,2,\dots,r$, $e_j$ is  the  primitive idempotent in $R$ induced by  a $q$-cyclotomic class $S_q(h_j)$ for some $h_j\in H$. 

For each $j=1,2,\dots,r$, assume that $e_j$ is decomposed as 
\[e_j=e_{j1}+e_{j2}+\dots+e_{js_j},\] 
where, for each $i=1,2,\dots,s_j$, $e_{ji}$ is the  primitive idempotent in $S$ defined corresponding to  a $q^l$-cyclotomic class $S_{q^l}(h_{ji})$ for some $h_{ji}\in S_q(h_j)$.

Note that all the elements in $S_q(h_j)$ have the same order. Hence, the $q^l$-cyclotomic classes $S_{q^l}(h_{ji})$, for $1\leq i=1\leq s_j$, have the same size.  Without loss of generality, we assume that $e_{j1}$ is defined  corresponding to $S_{q^l}(h_j)$.   
For each $j=1,2,\dots,r$, let $k_j$ and $d_j$ denote the $\mathbb{F}_q$-dimension of $e_j$ and the $\mathbb{F}_{q^l}$-dimension of $e_{j1}$, respectively. 
Then $k_j$ and $d_j$ are the smallest positive integers such that 
\[q^{k_j}\cdot h_j=h_j \text{ and } q^{ld_j}\cdot h_j=h_j.\]
Then $k_j\mid ld_j$ which implies that $\frac{k_j}{\gcd(l,k_j)}\mid d_j$.  Since $q^{l\frac{k_j}{\gcd(l,k_j)}}\cdot h_j= q^{k_j\frac{l}{\gcd(l,k_j)}}\cdot h_j=h_j$,  we have $d_j \mid \frac{k_j}{\gcd(l,k_j)}$. It follows that $d_j= \frac{k_j}{\gcd(l,k_j)}$. Hence,  $e_{ji}$'s  have the same $q^l$-size $d_j= \frac{k_j}{\gcd(l,k_j)}$ and $s_j=\gcd(l,k_j)$. 

Using  arguments similar to those in the proof of Proposition~\ref{prop:1g-Se}, we conclude the following result.
 
\begin{proposition} \label{prop:1g-Sej}
Let $\{e_1,e_2,\dots, e_r\}$ be a set of primitive idempotents of $R$. Assume that   $e_j=e_{j1}+e_{j2}+\dots+e_{js_j}$, where $e_{ji}$ is a primitive idempotent in $S$ for all $i=1,2,\dots, s_j$.
Then the following statements hold.
\begin{enumerate}[$i)$]
	\item For $j=1,2,\dots,r$, the elements $e_{j1},e_{j2},\dots, e_{js_j}$ are pairwise orthogonal (non-zero) idempotents of $Se_j$.
	\item $e_{ji}$ is the identity of $Se_{ji}$ for all $j=1,2,\dots,r$ and $i=1,2,\dots,s_j$.  
	\item   $e_j=e_{j1}+e_{j2}+\dots+e_{js_j}$ is the identity of $Se_j$  for all  $j=1,2,\dots,r$.
	\item  For $j=1,2,\dots,r$, we have $Se_j=Se_{j1}\oplus Se_{j2}\oplus\dots\oplus Se_{js_j}$, where $Se_{ji}$ is an extension field of $\mathbb{F}_q$ of order $q^{ld_j}$ for all $i=1,2,\dots,s_j$.
\end{enumerate}
\end{proposition}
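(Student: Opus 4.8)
The plan is to mirror the proof of Proposition~\ref{prop:1g-Se} almost verbatim, since the statement is the same decomposition result applied one level down: there the ambient primitive idempotents $e_1,\dots,e_r$ of $R$ were grouped into a single idempotent $e=\sum_j e_j$ and one studied $Se$; here the roles are played by the finer primitive idempotents $e_{j1},\dots,e_{js_j}$ of $S$ summing to $e_j$, and one studies $Se_j$ for a fixed $j$. The essential algebraic inputs are that the $e_{ji}$ are primitive idempotents in $S$, that distinct primitive idempotents in a commutative semisimple algebra are orthogonal, and that $e_j=\sum_{i=1}^{s_j}e_{ji}$. These facts have already been established in the text preceding the statement: the decomposition $e_j=e_{j1}+\dots+e_{js_j}$ into $q^l$-cyclotomic idempotents is given, and $S=\mathbb{F}_{q^l}[H]$ is semisimple because $\gcd(q^l,|H|)=\gcd(q,|H|)=1$.

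First I would prove $i)$. Fix $j$. The $e_{ji}$ are nonzero idempotents of $S$ by construction, and pairwise orthogonality, $e_{ji}e_{ji'}=0$ for $i\neq i'$, follows from primitivity exactly as in part $i)$ of Proposition~\ref{prop:1g-Se} (two distinct primitive idempotents $f,f'$ satisfy $ff'=0$). They lie in $Se_j$ because $e_{ji}=e_{ji}e_j\in Se_j$, using that $e_j=\sum_i e_{ji}$ together with orthogonality gives $e_{ji}e_j=e_{ji}^2=e_{ji}$. For $ii)$ and $iii)$, I would reproduce the one-line idempotency argument from Proposition~\ref{prop:1g-Se}: for any $se_{ji}\in Se_{ji}$ one has $se_{ji}=se_{ji}^2=(se_{ji})e_{ji}$, so $e_{ji}$ is a right (hence two-sided, as $R$ is commutative) identity for $Se_{ji}$; likewise $se_j=se_j^2=(se_j)e_j$ shows $e_j$ is the identity of $Se_j$. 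For the direct-sum statement in $iv)$, I would use $i)$: orthogonality of the $e_{ji}$ forces the sum $Se_{j1}+\dots+Se_{js_j}$ to be direct, and $e_j=\sum_i e_{ji}$ shows every element of $Se_j$ decomposes, giving $Se_j=\bigoplus_{i=1}^{s_j}Se_{ji}$.

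The only genuinely new content beyond Proposition~\ref{prop:1g-Se} is the field-theoretic claim in $iv)$ that each $Se_{ji}$ is an extension of $\mathbb{F}_q$ of order $q^{ld_j}$. Here I would invoke the structure theory already cited: each $e_{ji}$ is the primitive idempotent of the semisimple ring $S=\mathbb{F}_{q^l}[H]$ attached to the $q^l$-cyclotomic class $S_{q^l}(h_{ji})$, so $Se_{ji}$ is a simple component, namely a finite extension of the coefficient field $\mathbb{F}_{q^l}$ whose degree equals $|S_{q^l}(h_{ji})|=d_j$ (this is the analogue, over $S$, of the statement in Corollary~\ref{cor:1g-Re}$iv)$ that $Re_j\cong\mathbb{F}_{q^{k_j}}$). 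Since $[\mathbb{F}_{q^l}:\mathbb{F}_q]=l$ and all the classes $S_{q^l}(h_{ji})$ share the common size $d_j=k_j/\gcd(l,k_j)$ as established in the paragraph preceding the proposition, we get $|Se_{ji}|=(q^l)^{d_j}=q^{ld_j}$, uniformly in $i$.

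I expect no serious obstacle: the proposition is deliberately the ``one level down'' reprise of Proposition~\ref{prop:1g-Se}, and the text even flags that the argument is ``similar.'' The one point demanding a little care is making the cyclotomic-size bookkeeping in $iv)$ fully explicit — tying $|Se_{ji}|$ to the common $q^l$-class size $d_j$ and confirming it is independent of $i$ — but this is exactly the computation $d_j=k_j/\gcd(l,k_j)$, $s_j=\gcd(l,k_j)$ already carried out just before the statement, so it can be quoted rather than redone.
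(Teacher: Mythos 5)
Your proposal is correct and follows exactly the route the paper intends: the paper gives no separate proof, stating only that the result follows ``using arguments similar to those in the proof of Proposition~\ref{prop:1g-Se},'' and your argument is precisely that transposition, with the field-order claim in $iv)$ correctly supplied from the cyclotomic-class computation $d_j=k_j/\gcd(l,k_j)$ carried out in the paragraph preceding the statement.
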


\begin{theorem}\label{all-gen}  Let $j\in\{1,2,\dots,r\}$ be fixed. For $i=1,2,\dots,s_j$, let $\pi_i$ be a primitive element of $Se_{ji}$, a finite field of $q^{ld_j}$ elements. Let $L_j=\frac{q^{ld_j}-1}{q^{k_j}-1}$ and $T_j=\{\infty, 0,1,2,\dots, q^{ld_j}-2\}$. Then the elements 
	\begin{align}\label{eq:gen}
		\pi_t^{\nu_t}+	\pi_{t+1}^{\nu_{t+1}}+\dots+\pi_{s_j}^{\nu_{s_j}},
	\end{align}
	for all $1\leq t \leq s_j$, $0\leq \nu_t\leq L_j-1$, and $\nu_{t+1},\nu_{t+2},\dots, \nu_{s_j}\in T_j$, are a complete set of representatives of $[{(Se_j)^*}]$. (By convention, $\pi_i^\infty=0$.)
\end{theorem}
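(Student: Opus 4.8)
The plan is to recognize $\sim$ as the orbit relation of the (free) action of the group $(Re_j)^\times$ on $(Se_j)^*$ and to produce a canonical form for each orbit. By Corollary~\ref{cor:1g-Re}, $Re_j$ is a field of order $q^{k_j}$ with identity $e_j$, while by Proposition~\ref{prop:1g-Sej} we have the ring decomposition $Se_j=Se_{j1}\oplus\dots\oplus Se_{js_j}$ into fields $Se_{ji}\cong\mathbb{F}_{q^{ld_j}}$ with identities $e_{ji}$. I would identify each $A\in Se_j$ with its coordinate vector $(Ae_{j1},\dots,Ae_{js_j})$, so that multiplication by $u\in(Re_j)^\times$ acts coordinate-wise, sending the $i$-th entry $Ae_{ji}$ to $(ue_{ji})(Ae_{ji})$.

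The structural heart of the argument is to describe how $(Re_j)^\times$ sits inside each factor. First I would observe that $u\mapsto ue_{ji}$ is a ring homomorphism $Re_j\to Se_{ji}$ which is nonzero (it sends $e_j$ to $e_je_{ji}=e_{ji}\neq 0$), hence an embedding of the field $Re_j\cong\mathbb{F}_{q^{k_j}}$ into $Se_{ji}\cong\mathbb{F}_{q^{ld_j}}$; since $k_j\mid ld_j$, its image is the unique subfield of order $q^{k_j}$. Writing $Se_{ji}^\times=\langle\pi_i\rangle$, this image is exactly $\langle\pi_i^{L_j}\rangle$, because $\pi_i^{L_j}$ has order $\frac{q^{ld_j}-1}{L_j}=q^{k_j}-1$ and the cyclic group $Se_{ji}^\times$ has a unique subgroup of that order. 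Thus $(Re_j)^\times$ acts on the $i$-th coordinate through scaling by the index-$L_j$ subgroup $\langle\pi_i^{L_j}\rangle$, whose cosets are represented by $\pi_i^{0},\pi_i^{1},\dots,\pi_i^{L_j-1}$.

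With this in hand I would normalize. Given $A\in(Se_j)^*$, let $t$ be the least index with $Ae_{jt}\neq 0$. The map $u\mapsto (ue_{jt})(Ae_{jt})$ from $(Re_j)^\times$ into $Se_{jt}^\times$ is injective (the embedding $u\mapsto ue_{jt}$ is injective and $Ae_{jt}\neq 0$ in a field), so there is a unique $u\in(Re_j)^\times$ carrying the $t$-th coordinate to the distinguished coset representative $\pi_t^{\nu_t}$ with $0\le\nu_t\le L_j-1$ lying in the same $\langle\pi_t^{L_j}\rangle$-coset as $Ae_{jt}$. For this unique $u$, every later coordinate $(ue_{ji})(Ae_{ji})$ with $i>t$ is then completely determined and may be any element of $Se_{ji}$, which I record as $\pi_i^{\nu_i}$ with $\nu_i\in T_j$ (using $\pi_i^\infty=0$ for the zero entry). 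This shows $uA$ equals the element (\ref{eq:gen}), so every class meets the listed set. Conversely each listed element is already in this normalized shape---its first nonzero coordinate is a chosen coset representative---so two listed elements that are $\sim$-equivalent must share the same index $t$ and, by the uniqueness just established, coincide; hence distinct listed elements lie in distinct classes.

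Finally I would confirm irredundancy by a count: the number of listed elements is $\sum_{t=1}^{s_j}L_j\,(q^{ld_j})^{s_j-t}=L_j\,\frac{q^{ld_js_j}-1}{q^{ld_j}-1}=\frac{q^{lk_j}-1}{q^{k_j}-1}$ (using $d_js_j=k_j$), which matches $|[(Se_j)^*]|$ computed earlier. The main obstacle I anticipate is the uniform identification in the second step---verifying that $(Re_j)^\times$ embeds into each factor $Se_{ji}^\times$ precisely as the subgroup $\langle\pi_i^{L_j}\rangle$ and that normalizing a single coordinate pins down $u$ uniquely; once that diagonal-subfield picture is secured, the canonical form and the resulting bijection with the listed tuples are routine.
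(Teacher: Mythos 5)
Your proof is correct, and it differs from the paper's in a worthwhile way. The paper establishes the theorem by (i) counting the listed elements and checking that the total equals $|[(Se_j)^*]|$, and (ii) showing the listed elements are pairwise inequivalent: from $A=uB$ it extracts $t=x$ and $ue_{jt}=\pi_t^{\nu_t-\mu_t}$, then uses $u^{q^{k_j}-1}=e_j$ to force $\nu_t=\mu_t$, and finally concludes $u=e_j$ from the fact that $R(u-e_j)$ is a proper ideal of the field $Re_j$. Completeness (that every class is hit) is thus obtained only indirectly, via the cardinality count. You instead make explicit the structural fact that the paper uses only implicitly: each projection $u\mapsto ue_{ji}$ embeds the field $Re_j\cong\mathbb{F}_{q^{k_j}}$ into $Se_{ji}\cong\mathbb{F}_{q^{ld_j}}$ with image the unique subfield of order $q^{k_j}$, whose multiplicative group is exactly $\langle\pi_i^{L_j}\rangle$; the exponents $0,\dots,L_j-1$ are then literally coset representatives. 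This buys you a direct normalization argument -- every orbit contains exactly one listed element, proved constructively -- so the counting step becomes a mere consistency check rather than a load-bearing part of the proof. Your distinctness argument is also slightly cleaner: since $ue_{jt}$ lies in $\langle\pi_t^{L_j}\rangle$ and the $\pi_t^{\nu}$ with $0\le\nu\le L_j-1$ represent distinct cosets, you get $\nu_t=\mu_t$ and $u=e_j$ from injectivity of the embedding, avoiding the paper's order computation with $\pi_t^{(\nu_t-\mu_t)(q^{k_j}-1)}$ and its ideal-theoretic closing step. Both proofs rest on the same decomposition $Se_j=Se_{j1}\oplus\cdots\oplus Se_{js_j}$ from Proposition~\ref{prop:1g-Sej}; yours is the more transparent orbit/canonical-form formulation.
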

\begin{proof}
Note that the number of elements  in (\ref{eq:gen}) is 
\[L_jq^{ld_j(s_j-1)}+L_jq^{ld_j(s_j-2)}+\dots+L_j=\frac{q^{lk_j}-1}{q^{k_j}-1}= |[{(Se_j)^*}]|.\]
Hence, it suffices to show that the elements in (\ref{eq:gen}) are in different equivalence classes.  Let 
\[A=\pi_t^{\nu_t}+	\pi_{t+1}^{\nu_{t+1}}+\dots+\pi_{s_j}^{\nu_{s_j}} \text{ and }B=\pi_x^{\mu_x}+	\pi_{x+1}^{\mu_{x+1}}+\dots+\pi_{s_j}^{\mu_{s_j}},\]
where $0\leq \nu_t,\mu_x\leq L_j-1$, $\nu_{t+1},$ $\nu_{t+2},\dots,\nu_{s_j}\in T_j$, and $\mu_{x+1},$ $\mu_{x+2},\dots,\mu_{s_j}\in T_j$.
Assume that $[{A}]=[{B}]$. Then there exists $u\in (Re_j)^\times$ such that 
\begin{align*}\pi_t^{\nu_t}+	\pi_{t+1}^{\nu_{t+1}}+\dots+\pi_{s_j}^{\nu_{s_j}}&=A=uB=u\pi_x^{\mu_x}+	u\pi_{x+1}^{\mu_{x+1}}+\dots+u\pi_{s_j}^{\mu_{s_j}}.
\end{align*}
Since $\pi_t^{\nu_t}\in (Se_{jt})^\times$ and $u\pi_x^{\mu_x}\in (Se_{jx})^\times$, by the decomposition in Proposition~\ref{prop:1g-Sej}, $t=x$ and $\pi_t^{\nu_t}=u\pi_t^{\mu_t}\in Se_{jt}$. Then $ue_{jt}=\pi_t^{\nu_t-\mu_t}$. Since $u\in (Re_j)^\times$, we have $u^{q^{k_j}-1}=e_j$, and hence, $e_{jt}=e_{jt}e_j=\pi_t^{(\nu_t-\mu_t)(q^{k_j}-1)}.$ Since $0\leq \nu_t,\mu_t\leq L_j-1$ and $\pi_t$ has order $q^{ld_j}-1$, we conclude that $\nu_t=\mu_t$.
Hence, $ue_{jt}=e_{jt}=e_je_{jt}$ which implies $(u-e_j)e_{jt}=0$ in $Se_{jt}$.
It follows that 
\begin{align*}
	S(u-e_j)\subseteq S(e_{j1}+\dots+e_{j,t-1}+e_{j,t+1}+\dots+e_{js_j})\subsetneq Se_j.
	\end{align*}
Since $u,e_j\in Re_j$, $u-e_j\in Re_j$ and $R(u-e_j)\subsetneq Re_j$. Hence, $R(u-e_j)$ is the zero ideal, i.e., $u=e_j$. Therefore, $A=uB=e_jB=B$ since $e_j$ is the identity of $Se_j$.
\end{proof}

The following corollary now follows from Theorem~\ref{num} and Theorem~\ref{all-gen}.
\begin{corollary}\label{corgen}
	Let $\{e_1,e_2,\dots, e_r\}$ be a set of primitive idempotents of $R$ and $e=e_1+e_2+\dots+e_r$. Then all $1$-generator quasi-abelian codes having $e$ as their idempotent generator are of the form
	\[A_1+A_2+\dots+A_r,\]
	where $A_j\in (Se_j)^*$ is as defined in (\ref{eq:gen}).
\end{corollary}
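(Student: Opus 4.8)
The plan is to obtain this corollary as an immediate consequence of the two results just established, Theorem~\ref{num} and Theorem~\ref{all-gen}, with essentially no new computation. Theorem~\ref{num} already sets up the bijection $\sigma\colon [{\Omega}]\to\mathfrak{C}$ between the product of equivalence-class sets $[{\Omega}]=\prod_{j=1}^r[{(Se_j)^*}]$ and the set $\mathfrak{C}$ of all $1$-generator $H$-quasi-abelian codes with idempotent generator $e$; concretely, $\sigma$ sends a tuple $([{A_1}],\dots,[{A_r}])$ to the code $R\boldsymbol{a}$, where $A_1+\dots+A_r=\sum_{i=1}^l\alpha_ia_i$ and $\boldsymbol{a}=(a_1,\dots,a_l)$. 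Theorem~\ref{all-gen}, on the other hand, supplies for each index $j$ an explicit, irredundant complete set of class representatives for $[{(Se_j)^*}]$, namely the elements displayed in (\ref{eq:gen}). The whole corollary is then the act of feeding the second description into the first.

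First I would fix, for each $j\in\{1,\dots,r\}$, the distinguished representative $A_j$ of each class in $[{(Se_j)^*}]$ guaranteed by Theorem~\ref{all-gen}, so that as $A_j$ runs over the elements of the form (\ref{eq:gen}) the class $[{A_j}]$ runs over every element of $[{(Se_j)^*}]$ exactly once. Taking the product over $j$, the resulting tuples $([{A_1}],\dots,[{A_r}])$ then range over all of $[{\Omega}]$ exactly once. Next I would apply the bijection $\sigma$ of Theorem~\ref{num}: since $\sigma$ is a well-defined bijection onto $\mathfrak{C}$, the codes $R\boldsymbol{a}$ with generator $A_1+\dots+A_r$ (each $A_j$ as in (\ref{eq:gen})) exhaust $\mathfrak{C}$ without repetition. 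Rephrasing through the isomorphism $\varphi$, under which $A=A_1+\dots+A_r$ is the generator corresponding to $\boldsymbol{a}$, this is exactly the assertion that every code with idempotent generator $e$ is of the stated form, completing the proof.

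I do not anticipate a genuine obstacle here, as the statement is a formal corollary; the only points needing attention are bookkeeping rather than substance. The first is to be explicit that the list in Theorem~\ref{all-gen} is complete \emph{and} irredundant, which is what upgrades the passage through $\sigma$ from a mere surjection onto $\mathfrak{C}$ to a genuine parametrization (and which is what makes the family useful for enumeration and for the algorithm of this subsection). The second is simply to keep the identification under $\varphi$ straight: the corollary phrases the generator additively as $A_1+\dots+A_r\in Se$, whereas $\mathfrak{C}$ consists literally of the cyclic modules $R\boldsymbol{a}\subseteq R^l$, so one must remember that $A\mapsto\boldsymbol{a}$ is precisely the map $\varphi^{-1}$. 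Neither point requires more than a sentence.
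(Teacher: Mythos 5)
Your proposal is correct and matches the paper exactly: the paper likewise derives this corollary directly by composing the bijection $\sigma$ of Theorem~\ref{num} with the explicit complete set of representatives from Theorem~\ref{all-gen}, offering no further argument. Your added remarks on irredundancy and the identification via $\varphi$ are sound bookkeeping but introduce nothing beyond the paper's one-line deduction.
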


Combining the results above,  we summarize an algorithm to find all $1$-generator $H$-quasi-abelian codes as in $\mathbb{F}_q[G]$ follows.

\begin{center}
\parbox{15cm}{\hrule
\smallskip
\centerline{\bf Algorithm}
 
 \hrule 
 \bigskip 
For  abelian groups  $H\leq G$ and a finite field $\mathbb{F}_q$ with $\gcd(q,|H|)=1$ and $[G:H]=l$, do the following steps.
 \begin{enumerate}[$~~~~~1.$]
 \item Compute the $q$-cyclotomic classes of $H$ in $G$.
 \item Compute the set $\{e_1,e_2,\dots,e_r\}$ of  primitive idempotents of $R=\mathbb{F}_q[H]$ (see \cite[Proposition II.4]{DKL2000}).
 \item  For each $1\leq j\leq r$,   compute a  set $B_{j}$ of a complete set of representatives   of $[(Se_{j})^*]$ (see Theorem \ref{all-gen}).
 \item Compute the  idempotents of $R$, {\em i.e.,} the set \[T=\left\{\sum_{j=1}^te_{i_j}\mid 1\leq t\leq r    \text{ and }  1\leq i_1<i_2<\dots <i_t\leq r    \right\}.\]
 \item For each $e=\sum_{j=1}^te_{i_j}\in T$, compute the $1$-generator quasi-abelian codes having $e$ as their idempotent generator of the form
	\[A_1+A_2+\dots+A_t,\]
	where $A_j\in B_{i_j}$ (see Corollary \ref{corgen}).
	
	\item   Run $e$ over all elements of $T$. Then the $1$-generator $H$-quasi-abelian codes in $\mathbb{F}_q[G]$ are obtained.
 \end{enumerate}
 \hrule
  }
  \end{center}

\begin{example}
	Let $q=2$, $G=\mathbb{Z}_3\times \mathbb{Z}_6$ and $H=\mathbb{Z}_3\times 2\mathbb{Z}_6$. Denote by $a_0:=(0,0),$ $ a_1:=(1,0),$ $ a_2:=(2,0), $ $a_3:=(0,2), $ $a_4:=(1,2), $ $a_5:=(2,2),$ $ a_6:=(0,4), $ $a_7:=(1,4),$ and $ a_8:=(2,4),$  the elements in $H$.   Then $l=[G:H]=2$ and the elements in $H$ can be partitioned into the following $2$-cyclotomic classes $S_2(a_0)=\{a_0\}$, $S_2(a_1)=\{a_1,a_2\}$, $S_2(a_3)=\{a_3,a_6\}$,   $S_2(a_4)=\{a_4,a_8\}$, and $S_2(a_5)=\{a_7,a_5\}$. From   \cite[Proposition II.4]{DKL2000}, we note that 
	\begin{align*}
		e_1=&Y^{a_0}+Y^{a_1}+Y^{a_2}+Y^{a_3}+Y^{a_4}+Y^{a_5}+Y^{a_6}+Y^{a_7}+Y^{a_8},\\
		e_2=&Y^{a_1} +Y^{a_2}+Y^{a_4}+Y^{a_5} +Y^{a_7}+Y^{a_8},\\
		e_3= &Y^{a_3}+Y^{a_4}+Y^{a_5}+Y^{a_6} +Y^{a_7}+Y^{a_8},\\
		e_4= &Y^{a_1}+Y^{a_2}+Y^{a_3} +Y^{a_4}+Y^{a_6}+Y^{a_8} ,\\
		e_5= &Y^{a_1}+Y^{a_2} +Y^{a_3}+Y^{a_5}+Y^{a_6} +Y^{a_7}
	\end{align*}
	are primitive idempotents of $R:=\mathbb{F}_2[H]$ induced by $S_2(a_0)$, $S_2(a_1)$,  $S_2(a_3)$,   $S_2(a_4)$, and $S_2(a_5)$, respectively.
	
	Let $e:=e_1+e_2+e_3$. 	It follows from Theorem~\ref{num} that the number of $1$-generator $H$-quasi abelian codes in $\mathbb{F}_2[G]$ with idempotent generator $e$ is $3\cdot 5 \cdot 5=75$.

	Let $S:=\mathbb{F}_4[H]$, where $\mathbb{F}_4=\{0,1,\alpha,\alpha^2=1+\alpha\}$. Then $e_2=e_{21}+e_{22}$ and $e_3=e_{31}+e_{32}$, where 
\begin{align*}	
	e_{21}=&Y^{a_0}+\alpha^2Y^{a_1}+\alpha Y^{a_2}+Y^{a_3}+\alpha^2 Y^{a_4}+\alpha Y^{a_5}+Y^{a_6}+\alpha^2 Y^{a_7}+\alpha Y^{a_8},\\
	e_{22}=&Y^{a_0}+\alpha Y^{a_1}+\alpha^2Y^{a_2}+Y^{a_3}+\alpha Y^{a_4}+\alpha^2 Y^{a_5}+1 Y^{a_6}+\alpha Y^{a_7}+\alpha^2 Y^{a_8},\\
	e_{31}=&Y^{a_0}+Y^{a_1}+Y^{a_2}+\alpha^2 Y^{a_3}+\alpha^2 Y^{a_4}+\alpha^2 Y^{a_5}+\alpha Y^{a_6}+\alpha Y^{a_7}+\alpha Y^{a_8},\\ 
	e_{32}=&Y^{a_0}+ Y^{a_1}+ Y^{a_2}+\alpha Y^{a_3}+\alpha Y^{a_4}+\alpha Y^{a_5}+\alpha^2 Y^{a_6}+\alpha^2 Y^{a_7}+\alpha^2 Y^{a_8}
\end{align*}
	are primitive idempotents in $S$ induced by  $4$-cyclotomic classes $\{a_1\}$, $\{a_2\}$,  $\{a_3\} $ and $\{a_6\}$, respectively. 
	
	Now, we have $k_1=1$, $k_2=k_3=2$, $d_1=d_2=d_3=1$, $s_1=1$, and $s_2=s_3=2$. It follows that $L_1=\frac{2^2-1}{2-1}=3$, $L_2=L_3=\frac{2^2-1}{2^2-1}=1$, and $T_1=T_2=T_3=\{\infty, 0, 1,2\}$.

Then $\alpha e_1$, $\alpha e_{21}$, $\alpha e_{22}$, $\alpha e_{31}$, and $\alpha e_{32}$  are primitive elements of $Se_1$, $S e_{21}$, $S e_{22}$, $S e_{31}$, and $S e_{32}$, respectively. Therefore, we have	 that 
\begin{align*}
	B_1&= \{e_1,\alpha e_1,\alpha^2 e_1 \},\\
	B_2&= \{ e_{21}, e_{21}+ e_{22}, e_{21}+ \alpha e_{22}, e_{21}+ \alpha^2 e_{22}, \ e_{22}\},\text{ and }\\        
	B_2&= \{ e_{31}, e_{31}+ e_{32}, e_{31}+ \alpha e_{32}, e_{31}+ \alpha^2 e_{32}, \ e_{32}\}
\end{align*}
are complete sets of representatives of $[{(Se_1)^*}]$, $[{(Se_2)^*}]$, and $[{(Se_3)^*}]$, respectively.
Hence, all the generators of the $75$ $1$-generator $H$-quasi abelian codes in $\mathbb{F}_2[G]$ with idempotent generator $e$ are of the form 
\[A_1+A_2+A_3,\]
where $A_i\in B_i$ for all $i=1,2,3$.
\end{example}

In order to find  permutation inequivalent $1$-generator $H$-quasi abelian codes, the following theorem is useful.
\begin{theorem} Let $H\leq G$ be finite abelian groups of index $[G:H]=l$ and let $\{\alpha^{q^i}\mid 1\leq i\leq l\}$ be a fixed basis of $\mathbb{F}_{q^l}$ over $\mathbb{F}_q$.
	If $A=\sum_{i=1}^la_i\alpha^{q^i}\in Se$, then $A$ and $A^q=\sum_{i=1}^la_i^q\alpha^{q^{i+1}}$ generate permutation equivalent $H$-quasi abelian codes (viewed in $\mathbb{F}_q[G]$) with the same idempotent generator.
\end{theorem}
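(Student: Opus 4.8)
The plan is to exhibit one explicit permutation $\sigma$ of the coordinate set $G$ and to check directly that it carries the code $C_A:=\Phi^{-1}(\varphi^{-1}(RA))$ onto $C_{A^q}$. First I would record the shape of the generator of $C_{A^q}$. Since $q$ is a power of the characteristic of $\mathbb{F}_q$, the map $x\mapsto x^q$ is a ring endomorphism of $S=\mathbb{F}_{q^l}[H]$, so raising $A=\sum_{i=1}^l a_i\alpha^{q^i}$ to the $q$th power distributes termwise and gives $A^q=\sum_{i=1}^l a_i^q\alpha^{q^{i+1}}$, with $\alpha^{q^{l+1}}=\alpha^q$. Rewriting this in the prescribed normal basis $\{\alpha^{q^1},\dots,\alpha^{q^l}\}$ shows that the generator tuple $\boldsymbol{b}=\varphi^{-1}(A^q)$ is $(a_l^q,a_1^q,\dots,a_{l-1}^q)$; that is, $\boldsymbol{b}$ is obtained from $\boldsymbol{a}=(a_1,\dots,a_l)$ by applying the $q$th power in each coordinate and then shifting the coordinates cyclically by one. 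Two elementary facts will then drive everything: for $a\in R=\mathbb{F}_q[H]$ one has $a^q=\hat{\mu}_q(a)$, where $\hat{\mu}_q$ is the $\mathbb{F}_q$-algebra automorphism of $R$ induced by the multiplier $h\mapsto q\cdot h$ of $H$ (this uses $c^q=c$ for $c\in\mathbb{F}_q$); and $\hat{\mu}_q$ is bijective because $\gcd(q,|H|)=1$.

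For the assertion about idempotent generators I would compute $Re_{A^q}=\sum_{i=1}^l Ra_i^q=\sum_{i=1}^l\hat{\mu}_q(Ra_i)=\hat{\mu}_q\!\left(\sum_{i=1}^l Ra_i\right)=\hat{\mu}_q(Re_A)=Re_A$, where $e_A$ denotes the idempotent generator of $RA$. The last equality uses $\hat{\mu}_q(e_A)=e_A^q=e_A$, valid for any idempotent. Uniqueness of the idempotent generator then yields $e_{A^q}=e_A$.

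For the permutation equivalence I would set $\sigma:G\to G$, $h+\mathfrak{g}_i\mapsto q\cdot h+\mathfrak{g}_{i+1}$, where the coset indices are read modulo $l$ with $\mathfrak{g}_{l+1}=\mathfrak{g}_1$. This $\sigma$ is a bijection of $G$: it permutes the cosets $\mathfrak{g}_i+H$ cyclically and acts on the $H$-part of each coset by the bijection $h\mapsto q\cdot h$. Writing $\sigma=\sigma_2\circ\sigma_1$, where $\sigma_1$ applies the multiplier inside each coset and $\sigma_2$ is the cyclic coset shift, I would verify the two module-level identities $\sigma_1(\Phi^{-1}(r\boldsymbol{a}))=\Phi^{-1}(\hat{\mu}_q(r)\,\boldsymbol{a}')$ with $\boldsymbol{a}'=(a_1^q,\dots,a_l^q)$, and $\sigma_2(\Phi^{-1}(r\boldsymbol{a}'))=\Phi^{-1}(r\boldsymbol{b})$, for every $r\in R$. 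The first identity is just the homomorphism property of $\hat{\mu}_q$ read coordinate by coordinate; letting $r$ range over $R$ and using that $\hat{\mu}_q$ is onto upgrades it to $\sigma_1(C_A)=\Phi^{-1}(R\boldsymbol{a}')$, while the second gives $\sigma_2(\Phi^{-1}(R\boldsymbol{a}'))=C_{A^q}$. Composing yields $\sigma(C_A)=C_{A^q}$, so the two codes are permutation equivalent.

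The step I expect to be the real obstacle is the construction of $\sigma$, and specifically resisting the natural but wrong guess $g\mapsto q\cdot g$ on all of $G$: that map is not a permutation of $G$ in general, since the hypothesis is only $\gcd(q,|H|)=1$ and not $\gcd(q,|G|)=1$ (in the running example $q=2$ while $|G|=18$). The resolution is that the Frobenius twist acts only on the $\mathbb{F}_{q^l}$-coefficients, hence, through the normal basis, only on the $H$-part of each coordinate, while the basis reshuffling $\alpha^{q^i}\mapsto\alpha^{q^{i+1}}$ produces the coset shift; isolating these two effects is exactly what makes $\sigma$ well-defined. A secondary subtlety is that permutation equivalence concerns the whole module rather than a single generator, so the verification must be carried out at the level of $R\boldsymbol{a}$ and $R\boldsymbol{b}$; this is why I invoke the full ring-automorphism property of $\hat{\mu}_q$ and not merely its bijectivity on $H$.
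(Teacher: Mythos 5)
Your proposal is correct and follows essentially the same route as the paper: identify the generator tuple of $A^q$ as the coordinatewise Frobenius of $\boldsymbol{a}$ followed by a cyclic shift of the $l$ blocks, observe that $x\mapsto x^q$ on $R$ is the coefficient permutation induced by $h\mapsto q\cdot h$ (bijective since $\gcd(q,|H|)=1$), and compose the two to get the coordinate permutation of $G$. Your explicit description of $\sigma$ on $G$ and your warning against the naive guess $g\mapsto q\cdot g$ are slightly more careful than the paper's phrasing, but the underlying argument, including the idempotent computation, is the same.
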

\begin{proof} Let $e$ be the idempotent generator of a quasi-abelian code $RA$.
Then  
\begin{align*}
	 Ra_1^q+Ra_2^q+\dots+Ra_l^q \subseteq Ra_1+Ra_2+\dots+Ra_l=Re
	\end{align*}
 Assume that  $e=\sum_{i=1}^lr_ia_i$, where $r_i\in R$. It follows that  
\[e=e^q=\sum_{i=1}^lr_i^qa_i^q \in Ra_1^q+Ra_2^q+\dots+Ra_l^q.\] Hence, $Re= Ra_1^q+Ra_2^q+\dots+Ra_l^q$. Therefore, $A$ and $A^q$ generate codes with the same idempotent generator $e$.  

Let $\psi:R\to R$ be a ring homomorphism defined by 
\[\gamma \mapsto \gamma^q.\]
Let $\gamma=\sum_{h \in H}\gamma_hY^h$ and $\beta=\sum_{h \in H}\beta_hY^h$ be elements in $R$, where $\gamma_h$ and $\beta_h$ are elements in $\mathbb{F}_q$. If $\psi(\gamma)=\psi(\beta)$, then 
\[0=\gamma^q-\beta^q =(\gamma-\beta)^q=\sum_{h \in H}(\gamma_h-\beta_h)Y^{q\cdot h}.\] By comparing the coefficients, we have $\gamma_h=\beta_h$ for all $h\in H$, $i.e.$, $\gamma=\beta$. 
Hence, $\psi$ is a ring automorphism and 
\begin{align}\label{psi}
	R(a_l^q,a_1^q,\dots, a_{l-1}^q)&=R(\psi(a_l),\psi(a_1),\dots, \psi(a_{l-1}))\notag\\
	                               &= \Psi(R(a_l,a_1,\dots, a_{l-1}) ),
\end{align}
 where $\Psi$ is a natural extension of $\psi$ to $R^l$.

Since $\psi(\gamma)=\sum_{h \in H}\gamma_h Y^{q\cdot h}$, $\psi(\gamma)$ is just a permutation on the coefficients of $\gamma$. Hence, by (\ref{psi}), $\Psi\circ\Phi$ is a permutation on  $\mathbb{F}_q[G]$ such that  $\Phi^{-1}\left(R(a_l^q,a_1^q,\dots, a_{l-1}^q)\right)$ is permutation equivalent to $\Phi^{-1}\left(R(a_l,a_1,\dots, a_{l-1})\right)$   in $\mathbb{F}[G]$, where $\Phi$ is the $R$-module isomorphism defined in (\ref{mphi}). Therefore, the result follows since $R(a_l,a_1,\dots, a_{l-1})$ is permutation equivalent  to $R(a_1,a_2,\dots, a_{l})$.
\end{proof}

\section{Computational Results}\label{sec5}

It has been shown in \cite{J2015}  and  \cite{JL2014}  that a family of  quasi-abelian codes   contains various new and optimal codes. 
Here, we present  other $2$     new codes   from the  quasi-abelian  codes  together with  $1$ new code     obtained by    shortening of one of these  codes.

Given  an abelian group  $H=\mathbb{Z}_{n_1}\times \mathbb{Z}_{n_2}$ of order $n=n_1n_2$,
 denote by   $u=(u_0,u_1,u_2,\dots, u_{n-1})\in
\mathbb{F}_q^n$ the vector representation of   
\[ u =\sum\limits_{j=0}^{n_2-1}\sum\limits_{i=0}^{n_1-1}u_{jn_1+i}  Y^ {(i,j)} \text{ in } \mathbb{F}_q[H].\]
Let 
\begin{align}\label{cab}
 C_{(a,b)}:=\{(fa,fb)\mid f \in \mathbb{F}_q[H]\},
\end{align}
where $a$ and $b$ are elements in    $\mathbb{F}_q[H]$.
Using (\ref{cab}),  
 $2$  quasi-abelian codes whose minimum distance improves on Grassl's online table \cite{G2013} can be found. The codes $C_1$ and $C_2$ are  presented in Table \ref{T} and the generator matrices of  $C_1$ and $C_2$ are 
\[G_1=
\left[\begin{array}{c|c}
&~1~ 3~ 0~ 3~ 4~ 1~ 3~ 2~ 0~ 4~ 1~ 2~ 1~ 4~ 0~ 4~ 1~ 0~ 4~ 3~ 0~ 4~\\
&~ 1~ 3~ 4~ 4~ 3~ 1~ 4~ 0~ 2~ 4~ 1~ 3~ 0~ 2~ 2~ 4~ 3~ 1~ 1~ 3~ 4~ 0~\\
&~ 1~ 4~ 4~ 3~ 4~ 0~ 4~ 0~ 0~ 1~ 0~ 3~ 1~ 2~ 0~ 1~ 0~ 3~ 2~ 4~ 4~ 4~\\
&~ 4~ 4~ 3~ 3~ 4~ 2~ 3~ 3~ 1~ 3~ 4~ 0~ 3~ 3~ 2~ 1~ 1~ 1~ 1~ 0~ 3~ 0~\\
&~ 4~ 3~ 3~ 4~ 3~ 2~ 4~ 2~ 3~ 2~ 3~ 2~ 2~ 3~ 0~ 3~ 2~ 1~ 0~ 1~ 4~ 3~\\
&~ 4~ 4~ 2~ 4~ 4~ 1~ 4~ 1~ 2~ 4~ 2~ 1~ 4~ 0~ 0~ 1~ 1~ 2~ 0~ 4~ 0~ 4~\\
&~ 0~ 2~ 1~ 1~ 3~ 1~ 4~ 1~ 1~ 2~ 1~ 0~ 1~ 1~ 4~ 2~ 0~ 0~ 1~ 3~ 2~ 3~\\
I_{14}~& ~0~ 1~ 2~ 1~ 4~ 3~ 1~ 2~ 1~ 1~ 1~ 1~ 0~ 2~ 1~ 4~ 1~ 0~ 0~ 3~ 3~ 2~\\
&~ 0~ 1~ 1~ 2~ 1~ 4~ 3~ 1~ 2~ 1~ 0~ 1~ 1~ 4~ 2~ 1~ 0~ 1~ 0~ 2~ 3~ 3~\\
&~ 1~ 2~ 2~ 2~ 3~ 4~ 4~ 4~ 4~ 1~ 3~ 1~ 4~ 4~ 3~ 3~ 1~ 0~ 1~ 2~ 2~ 4~\\
&~1~ 2~ 3~ 1~ 4~ 0~ 2~ 2~ 4~ 3~ 4~ 0~ 4~ 1~ 2~ 2~ 0~ 1~ 1~ 3~ 3~ 2~\\
&~1~ 1~ 3~ 2~ 2~ 1~ 3~ 4~ 2~ 3~ 4~ 1~ 3~ 0~ 4~ 1~ 0~ 0~ 2~ 1~ 4~ 3~\\
&~4~ 0~ 4~ 1~ 0~ 3~ 2~ 4~ 0~ 1~ 0~ 3~ 2~ 2~ 2~ 1~ 1~ 0~ 4~ 1~ 4~ 0~\\
&~4~ 1~ 4~ 0~ 2~ 3~ 0~ 0~ 4~ 1~ 2~ 3~ 0~ 3~ 4~ 3~ 0~ 1~ 4~ 1~ 0~ 4~
\end{array}\right]
\]
and 
\[
G_2=
\left[\begin{array}{c|c}
  &~0~ 1~  0~ 4~ 4~ 0~ 0~ 1~ 4~ 4~ 0~ 4~ 1~ 3~ 2~ 3~ 3~ 1~ 1~ 3~ 3~ 2~ 0~ 1~ 4~\\
  &~4~ 4~  1~ 1~ 2~ 1~ 2~ 4~ 1~ 4~ 3~ 2~ 1~ 4~ 4~ 3~ 2~ 4~ 2~ 0~ 1~ 1~ 0~ 1~ 2~\\
  &~1~ 0~  4~ 0~ 0~ 0~ 4~ 4~ 4~ 1~ 4~ 1~ 0~ 2~ 3~ 3~ 1~ 1~ 3~ 3~ 2~ 3~ 1~ 4~ 0~\\
  &~0~ 1~  0~ 0~ 4~ 0~ 4~ 1~ 0~ 3~ 1~ 3~ 0~ 3~ 1~ 4~ 1~ 3~ 4~ 1~ 4~ 3~ 3~ 4~ 1~\\
  &~4~ 4~  0~ 0~ 0~ 0~ 1~ 1~ 4~ 3~ 3~ 4~ 1~ 4~ 3~ 1~ 4~ 1~ 3~ 0~ 3~ 1~ 3~ 0~ 1~\\
  I_{11}~&~1~ 0~  0~ 0~ 0~ 4~ 4~ 0~ 3~ 1~ 3~ 0~ 1~ 1~ 4~ 3~ 3~ 4~ 1~ 4~ 3~ 1~ 4~ 1~ 3~\\
  &~1~ 1~  4~ 0~ 4~ 0~ 4~ 3~ 2~ 1~ 0~ 0~ 4~ 1~ 3~ 1~ 2~ 3~ 3~ 2~ 3~ 4~ 2~ 4~ 2~\\
  &~4~ 0~  0~ 4~ 0~ 0~ 1~ 4~ 1~ 0~ 2~ 3~ 3~ 1~ 1~ 3~ 3~ 2~ 3~ 1~ 4~ 0~ 4~ 4~ 1~\\
  &~0~ 4~  1~ 1~ 2~ 1~ 1~ 2~ 1~ 3~ 2~ 1~ 2~ 4~ 2~ 2~ 4~ 4~ 3~ 1~ 2~ 0~ 0~ 3~ 3~\\
  &~1~ 1~  0~ 0~ 4~ 4~ 4~ 2~ 2~ 2~ 2~ 2~ 2~ 0~ 0~ 0~ 0~ 0~ 0~ 3~ 3~ 3~ 3~ 3~ 3~\\
  &~0~ 0~  1~ 1~ 1~ 1~ 1~ 2~ 2~ 2~ 4~ 4~ 4~ 1~ 1~ 1~ 1~ 1~ 1~ 1~ 1~ 1~ 4~ 4~ 4~
\end{array}\right],
\] respectively.

\begin{table}[!hbt]
	\centering
	\caption{New Codes from Quasi-Abelian   Codes}\label{T}
		\begin{tabular}{|c|r|r|l|}
\hline
name& $C_{(a,b)}$ & $H$ & $a,b$\\\hline
$C1$&$[36,14,15]_5 $&$\mathbb{Z}_3  \times \mathbb{Z}_{6}$& $a=(3, 3, 3, 0, 0, 1, 4, 3, 4, 0, 4, 4, 4, 4, 3, 0, 1, 0)$\\  
&&&$b=( 2, 4, 1, 1, 3, 3, 0, 0, 4, 4, 1, 0, 0, 1, 4, 2, 2, 4)$\\\hline
$C2$&$[36,11,18]_5 $&$\mathbb{Z}_3  \times \mathbb{Z}_{6}$& $a=( 2, 4, 4, 3, 4, 4, 3, 2, 4, 3, 4, 4, 3, 4, 2, 3, 4, 4)$\\  
&&&$b=(3, 0, 0, 0, 3, 3, 3, 0, 3, 0, 3, 0, 1, 1, 1, 1, 1, 1 )$\\ \hline
              \end{tabular}
\end{table}

 By puncturing $C_2$ at the  first coordinate, a $[35,11,17]_5 $ code can be obtained   with minimum distance improved by $1$ from Grassl's online table \cite{G2013}. All the computations are done using   MAGMA \cite{BCP1997}.

\section*{Acknowledgments}
  The authors thank San Ling   for useful discussions.

% Non-BibTeX users please use
%\section*{References}

\end{document}